\newtheorem{thm}{\bf Theorem}
\newtheorem{cor}[thm]{\bf Corollary}
\newtheorem{conj}[thm]{\bf Conjecture}
\newtheorem{lem}[thm]{\bf Lemma}
\theoremstyle{remark}
\newcommand{\eps}{\varepsilon}
\newcommand{\brm}[1]{\operatorname{#1}}
\newcommand{\x}{{\bf x}}
\newcommand{\z}{{\bf z}}
\newcommand{\y}{{\bf y}}
\newcommand{\vv}{{\bf v}}
\title{The extremal function for disconnected minors}
\author{
Endre Cs\'oka 
\thanks{Mathematics Institute, University of Warwick. Email: {\tt csokaendre@gmail.com}.} 
\and
%Katherine Edwards\and
Irene Lo\thanks{Department of Industrial Engineering and Operations Research, Columbia University. Email: {\tt iyl2104@columbia.edu}.}\and
Sergey Norin\thanks{Department of Mathematics and Statistics, McGill University. Email: {\tt snorin@math.mcgill.ca}. Supported by an NSERC grant 418520.}\and
Hehui Wu\thanks{Department of Mathematics, University of Mississippi. Email: {\tt hwu@olemiss.edu}.} \and
Liana Yepremyan\thanks{School of Computer Science, McGill University. Email: {\tt liana.yepremyan@mail.mcgill.ca}. Supported by an NSERC grant 418520.}}
\begin{document}
\maketitle
\baselineskip 20pt
\begin{abstract}
For a graph $H$ let $c(H)$ denote the supremum of $|E(G)|/|V(G)|$ taken over all non-null graphs $G$ not containing $H$ as a minor. We show that $$c(H) \leq \frac{|V(H)|+\brm{comp}(H)}{2}-1,$$ when $H$ is a union of cycles, verifying conjectures of Reed and Wood~\cite{ReeWoo14}, and Harvey and Wood~\cite{HarWoo15}. 

We derive the above result from a theorem which allows us to find two vertex disjoint subgraphs with prescribed densities in a sufficiently dense graph, which might be of independent interest.
\end{abstract}

\section{Introduction}

A classical theorem of Erd\H{o}s and Gallai determines the minimum number of edges necessary to guarantee existence of a cycle of length at least $k$ in a graph with a given number of vertices. (All the graphs considered in this paper are simple.)

\begin{thm}[Erd\H{o}s and Gallai~\cite{ErdGal59}]\label{thm:ErdGal}
Let $k \geq 3$ be an integer and let $G$ be a graph with $n$ vertices and more than $(k-1)(n-1)/2$ edges. Then $G$ contains a cycle of length at least $k$. 
\end{thm}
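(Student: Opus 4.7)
I would proceed by induction on $n$. The base case $n \leq k-1$ is vacuous, since then $(k-1)(n-1)/2 \geq \binom{n}{2}$ and no simple graph on $n$ vertices can strictly exceed this edge count. Hence one may assume $n \geq k$.

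For the inductive step, the first move is to reduce to the $2$-connected case. If $G$ is disconnected or has a cut-vertex, write $G = G_1 \cup G_2$ with $|V(G_1) \cap V(G_2)| \leq 1$ and $n_i := |V(G_i)| < n$. Since $e(G) = e(G_1) + e(G_2)$ and $(n_1 - 1) + (n_2 - 1) \leq n - 1$, an averaging argument forces $e(G_i) > (k-1)(n_i - 1)/2$ for some $i$, and the inductive hypothesis supplies a cycle of length at least $k$ inside $G_i \subseteq G$.

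Thus assume $G$ is $2$-connected, and split on the minimum degree $\delta(G)$. If some vertex $v$ has $d(v) \leq (k-1)/2$, remove it: $G - v$ has $n-1$ vertices and more than $(k-1)(n-1)/2 - (k-1)/2 = (k-1)(n-2)/2$ edges, so induction finishes the job. Otherwise $2\delta(G) \geq k$, and I would invoke Dirac's classical bound that the circumference of a $2$-connected graph is at least $\min(n, 2\delta(G))$, which here is at least $\min(n, k) = k$.

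The main technical obstacle is precisely the Dirac-type step in the final subcase. A compact proof takes a longest path $P = v_0 v_1 \dots v_\ell$; since $P$ cannot be extended, $\ell \geq \delta(G)$ and every neighbor of $v_0$ and $v_\ell$ lies on $P$. Using $2$-connectivity one locates indices $0 \leq j < i \leq \ell$ with $v_0 v_i, v_j v_\ell \in E(G)$, and by rerouting $P$ through these two chords one assembles a cycle of length at least $2\delta(G)$ (or a Hamilton cycle if $n \leq 2\delta(G)$). The delicate point is extracting length $2\delta$ rather than the naive $\delta + 1$; this is precisely where $2$-connectivity is essential, since in a merely connected graph (e.g.\ a broom) the longest cycle can be as short as $\delta + 1$.
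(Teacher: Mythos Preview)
The paper does not prove this theorem; it is simply quoted as a classical result of Erd\H{o}s and Gallai with a citation to~\cite{ErdGal59}, so there is no ``paper's own proof'' to compare against.

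That said, your outline is a correct and standard route to the Erd\H{o}s--Gallai theorem. The reduction to the $2$-connected case via an averaging over blocks is clean, and the dichotomy on $\delta(G)$ (delete a low-degree vertex and induct, or else $2\delta(G)\geq k$ and invoke Dirac's circumference bound) is exactly the textbook argument. The only place where your write-up is genuinely sketchy is the final paragraph: the longest-path argument as stated (finding crossing chords $v_0v_i$, $v_jv_\ell$ with $j<i$) does not by itself yield a cycle of length $2\delta$ without further work---in particular, one typically needs either a counting/pigeonhole step on the neighbour sets of $v_0$ and $v_\ell$ along $P$, or a separate argument via a longest cycle together with a fan lemma exploiting $2$-connectivity. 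You clearly know this is the delicate point, but if you intend to include a self-contained proof rather than cite Dirac's theorem, that step needs to be fleshed out.
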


One of the main results of this paper generalizes Theorem~\ref{thm:ErdGal} to a setting where, instead of a single cycle with prescribed minimum length, we are interested in obtaining a collection of vertex disjoint cycles. In the case when there are no restrictions on the lengths of cycles this problem was completely solved by Dirac and Justesen, who proved the following.

\begin{thm}[Dirac and Justesen~\cite{Just85}]\label{thm:Just}
Let $k \geq 2$ be an integer and let $G$ be a graph with $n\geq 3k$ vertices and more than $$\max\left\{(2k-1)(n-k), n - \frac{(3k-1)(3k-4)}{2} \right\}$$ edges. Then $G$ contains $k$ vertex disjoint cycles. 
\end{thm}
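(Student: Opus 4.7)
The plan is induction on $k$, with a nested induction on $n$. The base case $k=1$ is immediate: $(2k-1)(n-k) = n-1$, so the bound $|E(G)| > n - 1$ forces a cycle by the folklore result. For the inductive step, assume the theorem for $k-1$ (and for smaller $n$ at parameter $k$). Given $G$ with $n \geq 3k$ and $|E(G)|$ above the $\max$ threshold, the goal is to locate a cycle $C$ so that $G' := G - V(C)$ satisfies the induction hypothesis for $k-1$, namely $|V(G')| \geq 3(k-1)$ and $|E(G')| > (2k-3)(n - |V(C)| - (k-1))$; then $C$ together with the $k-1$ disjoint cycles supplied by induction in $G'$ yields the desired collection.

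The first reduction is to the case of large minimum degree. If some vertex $v$ has $\deg(v) \leq 2k-2$, then $G-v$ has more than $(2k-1)(n-k) - (2k-2) = (2k-1)(n-1-k) + 1$ edges, so the nested induction on $n$ applies as long as $n-1 \geq 3k$. The tight case $n = 3k$ presumably motivates the second term $n - \tfrac{(3k-1)(3k-4)}{2}$ in the $\max$, which handles graphs too small to afford a vertex deletion. Henceforth assume $\delta(G) \geq 2k-1$.

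In the high-minimum-degree regime, we seek a cheap short cycle. The key algebraic bookkeeping is
\[
(2k-1)(n-k) - (2k-3)\bigl(n - |V(C)| - (k-1)\bigr) = 2(n-k) + (2k-3)\bigl(|V(C)| - 1\bigr),
\]
so we may delete $V(C)$ safely provided the number of edges of $G$ incident to $V(C)$ is at most $2(n-k) + (2k-3)(|V(C)|-1)$. For a shortest cycle $C$ of length $g$ this incidence equals $\sum_{v \in V(C)} \deg(v) - g$, since $C$ has no chords. A first attempt is therefore to pick a shortest cycle, apply the Moore-type bound $g = O(\log_\delta n)$, and hope the degree sum is small compared to our budget.

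The main obstacle, and the delicate core of the argument, is the near-extremal regime where $G$ approximates the extremal graph $K_{2k-1} + \overline{K_{n-2k+1}}$: every cycle must pass through several vertices of degree close to $n-1$, so a naive shortest-cycle pick is far too expensive. Here one must exploit the strict inequality $|E(G)| > (2k-1)(n-k)$ to locate structural slack -- for instance, an edge inside what would otherwise be the independent part -- which produces a triangle whose total degree we can control. I would expect the proof to require a clean split into a "far from extremal" case (handled by the shortest-cycle argument combined with an averaging over candidate cycles through a low-degree vertex in $V(C)$) and a "close to extremal" case (handled by directly exhibiting an extra cheap triangle using the slack edge), with both cases feeding back into the inductive framework above.
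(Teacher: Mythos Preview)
The paper does not prove this theorem. Theorem~\ref{thm:Just} is quoted from Dirac and Justesen~\cite{Just85} as background, alongside the Erd\H{o}s--Gallai theorem, to motivate the paper's own results on disconnected minors; no proof or sketch is offered. So there is no ``paper's own proof'' to compare your proposal against.

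As for your sketch itself: the inductive framework (peel off a cheap cycle, apply induction on $k-1$) and the minimum-degree reduction are the standard opening moves for this type of result, and your edge-bookkeeping identity is correct. But you explicitly stop at the hard part. The near-extremal case around $K_{2k-1}+\overline{K_{n-2k+1}}$ is exactly where all the work lies, and ``I would expect the proof to require a clean split'' is a description of what a proof would need, not a proof. In particular, your proposed shortest-cycle-plus-Moore-bound idea does not by itself control the degree sum on $V(C)$ well enough when the shortest cycle is forced through several near-universal vertices, and you have not specified the mechanism that extracts a cheap triangle from the single slack edge in the close-to-extremal regime. If you want to complete this, you should consult Justesen's original argument; what you have is a plausible outline, not a proof.
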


We phrase our extensions of the above results in the language of minors. A graph $H$ is \emph{a minor} of a graph $G$  if a graph isomorphic to $H$ can be obtained from a subgraph of $G$ by contracting edges. Mader~\cite{Mader68} proved that for every graph $H$ there exists a constant $c$ such that every graph on $n \geq 1$ vertices with at least $cn$ edges contains $H$ as a minor. A well-studied  extremal question in graph minor theory is determining the optimal value of $c$ for a given graph $H$. Denote by $v(G)$ and $e(G)$ the number of edges and vertices of a graph $G$, respectively.  Following Myers and Thomason~\cite{Myers2005}, for a  graph $H$ with $v(H) \geq 2$ we define  
$c(H)$ as the supremum of $e(G)/v(G)$ taken over all non-null graphs $G$ not containing $H$ as a minor. We refer to $c(H)$ as \emph{the extremal function of $H$}. 

The extremal function of complete graphs has been extensively studied. Dirac~\cite{Dirac64}, Mader~\cite{Mader68}, J{\o}rgensen~\cite{Jorgensen94}, and Song and Thomas~\cite{SonTho06} proved that $c(K_t)=t-2$ for $t \leq 5$, $t \leq 7$, $t=8$ and $t=9$, respectively. Thomason~\cite{Thomason01} determined the precise asymptotics of $c(K_t)$, proving
$$c(K_t)=(\alpha+o_t(1))t\sqrt{\log{t}},$$
for an explicit constant $\alpha=0.37...$. Myers and Thomason~\cite{Myers2005} have extended the results of~\cite{Thomason01} to general dense graphs, while  
Reed and Wood~\cite{ReeWoo14} and Harvey and Wood~\cite{HarWooAverage15}  have recently proved bounds on $c(H)$ for sparse graphs, with the main result of~\cite{ReeWoo14} implying  that $$c(H) \leq 3.895v(H)\sqrt{\ln d(H)},$$ 
for graphs $H$ with average degree $d(H) \geq d_0$ for some absolute constant $d_0$.  

The extremal function  was explicitly determined for several structured families of graphs. In particular, Chudnovsky, Reed and Seymour~\cite{ChuReeSey11} have shown that $c(K_{2,t})=(t+1)/2$ for $t \geq 2$, and Kostochka and Prince~\cite{KosPri10} proved that $c(K_{3,t})=t+3$ for $t \geq 6300$. 

We determine the extremal function of $2$-regular graphs in which every component has odd number of vertices.
Let $kH$ denote the disjoint union of $k$ copies of the graph $H$.
Note that Theorems~\ref{thm:ErdGal} and~\ref{thm:Just} imply that $c(C_k)=(k+1)/2$ for $k \geq 3$, and $c(kC_3)=2k-1$  for $k \geq 1$. For a general $2$-regular graph $H$  Reed and Wood~\cite{ReeWoo14} conjectured that $c(H) \leq 2v(H)/3 -1$, and Harvey and Wood~\cite[Conjecture 5.5]{HarWoo15} conjectured that $c(kC_r) \leq  (r+1)/2-1$ for $r \geq 3$, $k \geq 1$. Our first result verifies these conjectures.

\begin{thm}\label{thm:cycles} Let $H$ be a disjoint union of cycles. Then
\begin{equation}\label{e:cycledensity}
c(H) \leq \frac{v(H)+\brm{comp}(H)}{2}-1.
\end{equation}
\end{thm}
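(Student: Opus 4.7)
The plan is to prove Theorem~\ref{thm:cycles} by induction on $k := \brm{comp}(H)$, the number of cycle components of $H$. The base case $k=1$ reads $c(C_r) \leq (r-1)/2$ and is immediate from Theorem~\ref{thm:ErdGal}: a graph $G$ with $e(G)/v(G) > (r-1)/2$ contains a cycle of length at least $r$, which in turn yields a $C_r$-minor by contracting excess edges.

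For the inductive step, write $H = C_{r_1} \cup H'$ where $H'$ is a disjoint union of $k-1$ cycles, and set
\[
\alpha := \frac{r_1 - 1}{2} \qquad\text{and}\qquad \beta := \frac{v(H') + (k-1)}{2} - 1.
\]
A direct calculation gives $\alpha + \beta + 1 = (v(H)+k)/2 - 1$, so the density hypothesis on $G$ becomes $e(G)/v(G) > \alpha + \beta + 1$. Invoking the density-splitting theorem advertised in the abstract, I would extract from $G$ two vertex-disjoint subgraphs $G_1, G_2$ with $e(G_1)/v(G_1) > \alpha$ and $e(G_2)/v(G_2) > \beta$. Theorem~\ref{thm:ErdGal} then produces a cycle of length at least $r_1$ in $G_1$, hence a $C_{r_1}$-minor, while the inductive hypothesis applied to $G_2$ and $H'$ yields an $H'$-minor. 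The two vertex-disjoint minors combine to form the desired $H$-minor in $G$.

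The real obstacle is the density-splitting theorem itself, whose target statement should read roughly: whenever $e(G)/v(G) > \alpha + \beta + 1$ for suitable nonnegative reals $\alpha, \beta$, the graph $G$ admits two vertex-disjoint subgraphs with densities exceeding $\alpha$ and $\beta$ respectively. The ``$+1$'' slack accounts for the density lost to the cut between the two parts: indeed, for a partition $V=V_1 \sqcup V_2$ with $|V_i|=n_i$ one has $e(G[V_1])+e(G[V_2]) \leq e(G)$ and $K_n$ drops exactly this much density when split, so the slack is tight in extremal examples. I expect the proof of this lemma to be the technical heart of the paper, likely proceeding via a minimum counterexample together with iterated deletion of low-degree vertices to expose a dense core, followed by an argument that either the core itself splits or further peeling forces a split to appear. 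Some bookkeeping is also needed so that each $G_i$ carries enough vertices to host its cycles, but the density hypothesis forces $v(G_i) > 2\alpha, 2\beta$ automatically, which covers this.
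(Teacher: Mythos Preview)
Your high-level plan (induction on the number of components, Erd\H{o}s--Gallai for the base case, and a density-splitting theorem for the inductive step) is exactly the paper's route. The gap is in the precise form of the splitting theorem. You state it as: if $e(G)/v(G) > \alpha+\beta+1$ then $G$ contains disjoint subgraphs with $e(G_i)/v(G_i) > \alpha,\beta$. The paper does \emph{not} prove this; it is recorded there as an open conjecture. What the paper proves (Theorem~\ref{thm:main}) is the variant with $v(\cdot)-1$ in place of $v(\cdot)$: from $e(G) > (\alpha+\beta+1)(v(G)-1)$ one obtains $G_1,G_2$ with $e(G_i) > \alpha(v(G_i)-1)$, $\beta(v(G_2)-1)$. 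With this version your induction does not close: the conclusion $e(G_2) > \beta(v(G_2)-1)$ is strictly weaker than the hypothesis $e(G_2)/v(G_2) > \beta$ you would need to invoke the inductive bound $c(H') \le \beta$ on the disconnected graph $H'$.

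The paper's fix is to introduce $c'(H) := \sup\{e(G)/(v(G)-1)\}$, prove $c'(H_1\cup H_2) \le c'(H_1)+c'(H_2)+1$ directly from Theorem~\ref{thm:main}, and then observe (via a gluing-on-a-vertex argument) that $c'(H)=c(H)$ whenever $H$ is $2$-connected. Since each cycle is $2$-connected, one gets $c(H)\le c'(H)\le \sum_i c'(C_{r_i})+k-1=\sum_i c(C_{r_i})+k-1$, and now Erd\H{o}s--Gallai (which is exactly the statement $e(G)>(r-1)(v(G)-1)/2 \Rightarrow C_r$-minor, i.e.\ a bound on $c'(C_r)$) finishes. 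Equivalently, you could run your induction on $c'$ rather than $c$ and pass to $c$ only at the end. As a side remark, your guess about the proof of the splitting theorem (minimal counterexample, peeling low-degree vertices) is not what the paper does: it sets up a fractional relaxation on $[0,1]^{V(G)}$, finds a balanced fractional point, and rounds it in two stages.
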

 
It is not hard to see and is shown in Section~\ref{sec:minors} that, if  every component of $H$ is odd, then the bound (\ref{e:cycledensity}) is tight.

Theorem~\ref{thm:cycles} follows immediately from Theorem~\ref{thm:ErdGal} and the following more general result, which we prove in Section~\ref{sec:minors}.

\begin{thm}\label{thm:union} Let $H$ be a disjoint union of $2$-connected graphs $H_1$,$H_2$,\ldots,$H_k$. Then
$$
c(H) \leq c(H_1)+c(H_2)+\ldots+c(H_k)+k-1.
$$
\end{thm}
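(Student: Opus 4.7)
The plan is to prove Theorem~\ref{thm:union} by induction on $k$, with the key engine being the splitting theorem promised in the abstract. I expect that splitting result to take the form: for any real numbers $a, b \ge 0$, every graph $G$ with $e(G)/v(G) > a + b + 1$ contains two vertex-disjoint non-null induced subgraphs $G_1, G_2$ satisfying $e(G_1)/v(G_1) > a$ and $e(G_2)/v(G_2) > b$. The precise form of the ``$+1$'' slack in this statement is what will match the ``$+k-1$'' in the theorem after $k-1$ applications.

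The base case $k=1$ is immediate from the definition of $c(H_1)$. For the inductive step, assume the theorem holds for $k-1$ two-connected components and let $G$ be a graph with $e(G)/v(G) > c(H_1) + \cdots + c(H_k) + k - 1$. I apply the splitting lemma with $a = c(H_1)$ and $b = c(H_2) + \cdots + c(H_k) + (k-2)$, noting the arithmetic $a + b + 1 = c(H_1) + \cdots + c(H_k) + k - 1$ matches the density hypothesis on $G$ exactly. This produces vertex-disjoint induced subgraphs $G_1, G_2 \subseteq G$ with $e(G_1)/v(G_1) > c(H_1)$ and $e(G_2)/v(G_2) > c(H_2) + \cdots + c(H_k) + (k-2)$. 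By the definition of the extremal function, $G_1$ contains $H_1$ as a minor; by the inductive hypothesis applied to $H_2 \sqcup \cdots \sqcup H_k$, the subgraph $G_2$ contains $H_2 \sqcup \cdots \sqcup H_k$ as a minor. Since $V(G_1) \cap V(G_2) = \varnothing$, the two collections of branch sets are automatically disjoint in $G$, and together they realize $H$ as a minor of $G$.

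The main obstacle is establishing the splitting lemma itself. The natural attack is to take a minimum counterexample $G$, let $G^* \subseteq G$ be a subgraph of maximum density $\rho^*$, and split into cases: if $\rho^* \le a$ then $\rho(G) \le a < a + b + 1$, contradicting the hypothesis; otherwise $G^*$ already provides the first piece, and one must show $G - V(G^*)$ has density exceeding $b$. For the bookkeeping I would use that by maximality of $\rho^*$ no vertex of $G - V(G^*)$ has more than $\rho^*$ neighbours in $V(G^*)$ (else extending $G^*$ would increase its density), and try to reconcile this with the global edge count of $G$; the ``$+1$'' slack is presumably what is needed to close the arithmetic, although I expect that a naive count falls short and that some additional trick -- for instance, iteratively deleting low-degree vertices or considering a weighted notion of density -- will be required. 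The two-connectedness of the $H_i$ does not enter the splitting step, but I expect it to appear when verifying that the minor models found in $G_1$ and $G_2$ combine correctly, by ensuring that each $H_i$ admits a minor model confined to a single two-connected block of its host.
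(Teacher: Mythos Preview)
Your inductive skeleton is exactly what the paper uses, but the splitting lemma you postulate is too strong. The paper's Theorem~\ref{thm:main} reads: if $e(G) > (s+t+1)(v(G)-1)$ then there exist vertex-disjoint non-null $G_1,G_2$ with $e(G_i) > s(v(G_i)-1)$ (respectively $t$). The normalisation is $e/(v-1)$, not $e/v$. Your $e/v$ version would immediately give $c(H_1\sqcup H_2)\le c(H_1)+c(H_2)+1$ for \emph{arbitrary} $H_1,H_2$, which is precisely Qian's Conjecture~\ref{conj:main} and is stated as open; so the induction cannot be run directly on $c(\cdot)$.

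The paper fixes this by introducing the auxiliary function $c'(H):=\sup\{e(G)/(v(G)-1): H\not\preceq G\}$, for which Theorem~\ref{thm:main} yields $c'(H_1\sqcup H_2)\le c'(H_1)+c'(H_2)+1$ with no connectivity assumption (Corollary~\ref{cor:c1}). The $2$-connectedness of the $H_i$ then enters, but not where you guessed: combining vertex-disjoint minor models is automatic. What is needed is $c'(H_i)=c(H_i)$, and this is Lemma~\ref{lem:c1}. Its proof glues many copies of a witness $G$ at a single vertex; because $H_i$ is $2$-connected, any $H_i$-minor would have to live in a single block, so the glued graph still avoids $H_i$, while its $e/v$ ratio tends to $e(G)/(v(G)-1)$. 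One then concludes $c(H)\le c'(H)\le\sum c'(H_i)+k-1=\sum c(H_i)+k-1$.

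Finally, your sketch for the splitting lemma via a maximum-density subgraph is far from the paper's argument: Theorem~\ref{thm:main} is proved by locating a balanced fractional vector in $[0,1]^{V(G)}$ and rounding it to an integral one in two stages, not by any extremal-subgraph bookkeeping.
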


Theorem~\ref{thm:union} additionally allows us  to determine the extremal function for the disjoint union of small complete minors.

\begin{cor}\label{cor:complete} $c(kK_t) = kt-k-1$ for $k\geq 1$ and $3 \leq t \leq 9$.
\end{cor}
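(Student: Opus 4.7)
The plan is to establish the matching upper and lower bounds separately.

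For the upper bound, I would apply Theorem~\ref{thm:union} with $H_1 = \cdots = H_k = K_t$. Since $K_t$ is $2$-connected for every $t \geq 3$, the theorem yields $c(kK_t) \leq k\cdot c(K_t) + (k-1)$. The classical results of Dirac, Mader, J{\o}rgensen, and Song and Thomas cited in the introduction give $c(K_t) = t-2$ throughout the range $3 \leq t \leq 9$, and substitution produces
\[
c(kK_t) \leq k(t-2) + (k-1) = kt - k - 1.
\]

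For the matching lower bound, set $s := k(t-1) - 1$ and consider, for each $n \geq 1$, the graph $G_n$ obtained as the join of a clique on $s$ vertices with an independent set on $n$ vertices. Then $v(G_n) = s + n$ and $e(G_n) = \binom{s}{2} + s n$, so $e(G_n)/v(G_n) \to s = kt - k - 1$ as $n \to \infty$, which yields $c(kK_t) \geq kt-k-1$ once we verify that $G_n$ contains no $kK_t$ minor.

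The key structural observation is that any connected subgraph of $G_n$ whose vertex set lies entirely on the independent side consists of a single vertex, and two such singletons are non-adjacent in $G_n$. Hence in any $K_t$ minor of $G_n$ at most one of the $t$ branch sets can avoid the clique side, so at least $t-1$ of them each intersect the clique. If $G_n$ admitted $k$ vertex-disjoint $K_t$ minors, they would collectively require at least $k(t-1)$ distinct vertices of the clique, contradicting $s = k(t-1) - 1$. I anticipate no significant obstacle in this plan: the upper bound is immediate from Theorem~\ref{thm:union} together with the known values of $c(K_t)$, and the lower bound follows from the natural join construction, which generalizes the $K_{2k-1}$-join-with-independent-set example underlying $c(kK_3) = 2k-1$.
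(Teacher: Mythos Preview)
Your proposal is correct and follows essentially the same approach as the paper: the upper bound via Theorem~\ref{thm:union} and $c(K_t)=t-2$ is identical, and your lower-bound construction $G_n$ is precisely the graph $\bar{K}_{s,n}$ used in the paper's Lemma~\ref{lem:tau}. The only cosmetic difference is that the paper packages the lower bound as the general statement $c(H)\geq\tau(H)-1$ and then observes $\tau(kK_t)=k(t-1)$, whereas you inline that argument directly for $kK_t$.
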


Let us note that the restriction on connectivity of components of $H$ in Theorem~\ref{thm:union} is an artefact of the proof method, and  the following conjecture of Qian, which motivated our work, relaxes this restriction.

\begin{conj}[Qian~\cite{Qian}]\label{conj:main} Let $H$ be a disjoint union of non-null graphs $H_1$ and $H_2$ then
$$c(H) \leq c(H_1) + c(H_2)+1.$$ 
\end{conj}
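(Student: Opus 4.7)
The plan is to reduce the conjecture to the following density-splitting statement: if $e(G)/v(G) > c(H_1)+c(H_2)+1$, then $G$ contains two vertex-disjoint subgraphs $F_1, F_2$ with $e(F_i)/v(F_i) > c(H_i)$ for $i=1,2$. Once this is established, the definition of the extremal function immediately yields $H_i$ as a minor of $F_i$; since $V(F_1) \cap V(F_2)=\emptyset$, the two collections of branch sets combine to realize $H = H_1 \sqcup H_2$ as a minor of $G$.

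For the splitting step itself, I would start with a minimum-order counterexample $G$, which by a standard deletion argument satisfies $\delta(G) > c(H_1)+c(H_2)+1$. Choose $F_1 \subseteq G$ to have $e(F_1)/v(F_1) > c(H_1)$ while being vertex-minimal with this property; minimality forces $\delta(F_1) > c(H_1)$ and, more generally, $e(F_1[S]) \le c(H_1)\,|S|$ for every proper $S \subsetneq V(F_1)$. The hope is that $F_2 := G - V(F_1)$ satisfies $e(F_2)/v(F_2) > c(H_2)$, which one tries to verify by comparing
\[
e(G) = e(F_1) + e(F_2) + e_G(V(F_1),V(F_2))
\]
against $(c(H_1)+c(H_2)+1)\,v(G)$ and using the internal density bound on $F_1$.

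The central obstacle is bounding the cross-edges $e_G(V(F_1),V(F_2))$: the crude bound $v(F_1)\cdot v(F_2)$ is far too weak, and the ``$+1$'' slack in the conjectured inequality corresponds morally to paying at most one cross-edge per boundary vertex of $F_1$. Making this precise seems to require either (i) choosing $F_1$ to additionally minimize its boundary into $G - V(F_1)$, in the spirit of the authors' use of $2$-connectivity in Theorem~\ref{thm:union}, or (ii) a swap/reallocation procedure: whenever a vertex $v \in V(F_1)$ has too many neighbours outside $F_1$, move it into the ``$F_2$-side'' and re-establish vertex-minimality of $F_1$. If this process terminates with both densities still above their thresholds, the conjecture follows. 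The reason I expect the general case to be genuinely hard is that when $H_1$ has a cut vertex, the corresponding branch set of an $H_1$-minor can attach to the rest of $G$ through a single vertex, so the ``$+1$'' budget hidden in the inequality cannot be charged to anything robust inside the $H_1$-minor; this is precisely the phenomenon that forces the $2$-connectivity hypothesis in Theorem~\ref{thm:union}, and recovering it by purely density-theoretic means is, to my mind, the main technical hurdle in resolving Conjecture~\ref{conj:main}.
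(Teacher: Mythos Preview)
The statement you are attempting is Conjecture~\ref{conj:main}, which the paper does \emph{not} prove; it is recorded as an open problem of Qian, so there is no paper proof to compare against. What the paper establishes is the weaker Theorem~\ref{thm:union}, where each $H_i$ is required to be $2$-connected, together with the variant Corollary~\ref{cor:c1} for the modified extremal function $c'(H)=\sup e(G)/(v(G)-1)$.

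Your proposal is, by your own admission, not a proof but a plan with an identified hole: the cross-edge term $e_G(V(F_1),V(F_2))$ is uncontrolled, and neither the boundary-minimizing choice of $F_1$ nor the swap procedure is actually carried out. That gap is genuine. Indeed, your intermediate density-splitting statement --- that $e(G)/v(G)>s+t+1$ yields disjoint $F_1,F_2$ with $e(F_i)/v(F_i)>s$ (resp.\ $>t$) --- is essentially the paper's Conjecture~\ref{conj:partition}, also left open. The paper sidesteps this by proving Theorem~\ref{thm:main}, where both hypothesis and conclusion use $v(\cdot)-1$ in place of $v(\cdot)$; that is why the deduction lands at $c'$ rather than $c$, and the $2$-connectivity hypothesis in Theorem~\ref{thm:union} is used solely to pass from $c'$ back to $c$ via Lemma~\ref{lem:c1}. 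Your diagnosis that cut vertices in $H_i$ are the obstruction is therefore on target, but note that the paper's route to its partial result is not a minimal-subgraph/swap argument at all: Theorem~\ref{thm:main} is proved by a fractional relaxation over $[0,1]^{V(G)}$ followed by a two-stage rounding, and the purely combinatorial line you sketch is not pursued there.
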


We prove Theorem~\ref{thm:union} by showing that the graph $G$ with at least $(c(H_1)+c(H_2)+\ldots+c(H_k)+k-1)v(G)$ edges contains $k$ vertex disjoint subgraphs $G_1,\ldots, G_k$, such that $G_i$ is sufficiently dense to guarantee $H_i$ minor for every $1 \leq i \leq k$.
The bulk of the paper is occupied by the proof of the following technical theorem, which accomplishes that.

\begin{thm}\label{thm:main} Let $s,t \geq 1$ be real, and  
let $G$ be a non-null graph with $e(G) > (s+t+1)(v(G)-1)$. Then there exist vertex disjoint non-null subgraphs $G_1$ and $G_2$ of $G$ such that $e(G_1) > s(v(G_1)-1)$ and $e(G_2) > t(v(G_2)-1)$. 
\end{thm}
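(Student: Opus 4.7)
The plan is to prove the theorem by induction on $v(G)$.

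The base case is vacuous: since $e(G) \le \binom{v(G)}{2}$, the hypothesis forces $v(G) > 2(s+t+1)$, so nothing needs to be proved for smaller graphs. For the inductive step, two standard reductions apply. First, if some vertex $v$ satisfies $\deg_G(v) \le s+t+1$, then $e(G-v) \ge e(G) - (s+t+1) > (s+t+1)(v(G)-1) - (s+t+1) = (s+t+1)(v(G-v)-1)$, so $G-v$ satisfies the hypothesis and induction yields $G_1,G_2 \subseteq G-v \subseteq G$. Second, if $G$ is disconnected or has a cut vertex, decomposing $G$ accordingly and comparing $\sum_i e(G_i)$ against $(s+t+1)\sum_i (v(G_i)-1)$ identifies a proper subgraph satisfying the hypothesis. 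So we may assume $G$ is $2$-connected with $\delta(G) > s+t+1$.

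The core of the proof is to find the desired split in this setting. I would choose $V_1 \subseteq V(G)$ maximizing the density excess $g_s(V_1) := e(G[V_1]) - s|V_1|$. The hypothesis gives $g_s(V(G)) \ge (t+1)v(G) - (s+t+1) > 0$, so $g_s(V_1) > 0$ and hence $e(G[V_1]) > s|V_1| \ge s(|V_1|-1)$, giving a valid candidate $G_1 := G[V_1]$. Maximality forces $\deg_G(u, V_1) \le s$ for every $u \notin V_1$ (otherwise $g_s(V_1 \cup \{u\}) = g_s(V_1) + \deg_G(u, V_1) - s$ would exceed $g_s(V_1)$), yielding $e_G(V_1, V(G)\setminus V_1) \le s|V(G)\setminus V_1|$. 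Setting $V_2 := V(G) \setminus V_1$, we seek a subgraph $H \subseteq G[V_2]$ with $e(H) > t(v(H)-1)$; if none exists then in particular $e(G[V_2]) \le t(|V_2|-1)$, and combining this with $e(G[V_1]) \le \binom{|V_1|}{2}$ and the cross-edge bound against the hypothesis yields $|V_2| < \binom{|V_1|}{2} - (s+t+1)|V_1| + (s+1)$, which is a contradiction whenever $|V_1|$ is small (as the right-hand side is then non-positive), and forces $|V_2|$ to be so small in the opposite regime that the min-degree condition on $G$ cannot be maintained (each $u \in V_2$ has $\deg_G(u) > s+t+1$ and at most $s$ neighbours in $V_1$, hence more than $t+1$ in $V_2$).

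The main obstacle is the boundary case $V_1 = V(G)$, which can occur when $G$ is very dense (for example $G = K_n$) and leaves $V_2$ empty; here the natural split is into two smaller dense subgraphs with several vertices discarded, rather than a genuine bipartition. I would address this either by restricting the $g_s$-maximisation to proper subsets of $V(G)$, at the cost of a slightly weakened bound $\deg_G(u, V_1) \le s+1$ for the single vertex whose addition completes $V(G)$, or by running the symmetric argument with $V_2$ chosen as a $g_t$-maximiser on the other side and then using the two-sided analysis. The $2$-connectedness and strict minimum-degree bound inherited from the reductions should rule out simultaneous failure of both approaches, but the careful case analysis separating the ``$|V_1|$ small'' and ``$|V_1|$ large'' regimes, and the interplay between the $g_s$- and $g_t$-maximisers, is the technical crux.
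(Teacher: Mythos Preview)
Your approach is genuinely different from the paper's, which works by continuous relaxation: it optimises a pair of density functionals over $[0,1]^{V(G)}$, shows the fractional optimum has support a clique, and then rounds carefully. So a purely combinatorial greedy/inductive proof would be of independent interest. However, your sketch has a real gap at exactly the point you flag, and I do not see how your suggested fixes close it.

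The issue is your ``large $|V_1|$'' regime. From your accounting you obtain
\[
|V_2| \;<\; \binom{|V_1|}{2} - (s+t+1)|V_1| + (s+1),
\]
which is \emph{quadratic} in $|V_1|$; for $|V_1|$ well above $2(s+t+1)$ the right-hand side is large, so this places no useful upper bound on $|V_2|$. The only additional leverage you extract is that each $u\in V_2$ has $\deg_{G[V_2]}(u) > t+1$, giving $e(G[V_2]) > \tfrac{t+1}{2}|V_2|$. But this does \emph{not} contradict $e(G[V_2]) \le t(|V_2|-1)$ once $|V_2| \ge 2t/(t-1)$, and it certainly does not preclude every subgraph $H\subseteq G[V_2]$ from satisfying $e(H)\le t(v(H)-1)$: graphs with minimum degree slightly above $t+1$ and no subgraph of density above~$t$ exist in abundance (e.g.\ high-girth $(t+2)$-regular graphs for integer $t>2$). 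So the sentence ``forces $|V_2|$ to be so small \dots\ that the min-degree condition cannot be maintained'' is simply not justified.

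Your proposed repairs do not help. Restricting the $g_s$-maximisation to proper subsets does not yield $\deg_G(u,V_1)\le s+1$ for the exceptional vertex: if $|V_1|=n-1$ and the lone outside vertex $u$ is adjacent to all of $V_1$ (as in $K_n$), then $\deg_G(u,V_1)=n-1$, and $G_2$ would have to live on a single vertex. Running the symmetric $g_t$-maximisation faces the same obstruction: in $K_n$ both maximisers equal $V(G)$. The difficulty is precisely that a $g_s$-maximiser carries no information about how to \emph{split} a graph that is globally dense; it only certifies that one dense piece exists. The paper's fractional approach sidesteps this by starting from the constant vector $\x \equiv (s+\tfrac12)/(s+t+1)$, which builds the split into the initial point and then rounds while maintaining both density conditions simultaneously.
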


In Section~\ref{sec:minors} we derive Theorem~\ref{thm:union} from Theorem~\ref{thm:main}. We prove Theorem~\ref{thm:main} in Section~\ref{sec:proof}.

\section{Proof of Theorem~\ref{thm:union}}\label{sec:minors}

In this section we derive Theorem~\ref{thm:union} from  Theorem~\ref{thm:main} and prove a couple of easy related results.

Theorem~\ref{thm:main} is naturally applicable to the following variant of the extremal function. For a graph $H$ with $v(H) \geq 3$ define $c'(H)$ to be the supremum of $e(G)/(v(G)~-~1)$ taken over all graphs $G$ with $v(G) > 1$ not containing $H$ as a minor. Theorem~\ref{thm:main} implies the following variant of Conjecture~\ref{conj:main}.

\begin{cor}\label{cor:c1}
Let $H$ be a disjoint union of graphs $H_1$ and $H_2$ such that $v(H_1),v(H_2) \geq 3$. Then 
$$
c'(H) \leq c'(H_1)+c'(H_2)+1.
$$
\end{cor}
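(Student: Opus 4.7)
The plan is an immediate application of Theorem~\ref{thm:main}. Set $s := c'(H_1)$ and $t := c'(H_2)$. To invoke Theorem~\ref{thm:main} one needs $s,t \geq 1$; this follows from the hypothesis $v(H_i) \geq 3$, since the single edge $K_2$ satisfies $v(K_2)=2>1$, cannot contain $H_i$ as a minor (having too few vertices), and contributes the ratio $e(K_2)/(v(K_2)-1) = 1$ to the supremum defining $c'(H_i)$.

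Now suppose $G$ is a graph with $v(G)>1$ and $e(G) > (s+t+1)(v(G)-1)$. Theorem~\ref{thm:main} produces vertex-disjoint non-null subgraphs $G_1,G_2$ of $G$ satisfying $e(G_1) > s(v(G_1)-1)$ and $e(G_2) > t(v(G_2)-1)$. The right-hand sides are non-negative and the inequalities are strict, so each $G_i$ has at least one edge and therefore at least two vertices. Hence $e(G_i)/(v(G_i)-1) > c'(H_i)$, and by the definition of $c'(H_i)$ as a supremum over graphs (on more than one vertex) not containing $H_i$ as a minor, $G_i$ must in fact contain $H_i$ as a minor.

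Since $G_1$ and $G_2$ lie on disjoint vertex sets of $G$, the two minors can be realised simultaneously, so $G$ contains $H_1 \sqcup H_2 = H$ as a minor. Taking the contrapositive and passing to the supremum in the definition of $c'(H)$ yields the required bound $c'(H) \leq s+t+1 = c'(H_1)+c'(H_2)+1$.

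There is no substantive obstacle here: the corollary is precisely the reformulation of Theorem~\ref{thm:main} in the language of $c'$ that the theorem was designed to enable. The only small point worth checking is the bound $c'(H_i) \geq 1$, needed so that the parameters $s,t$ lie in the range where Theorem~\ref{thm:main} has been stated, and this is exactly the role played by the hypothesis $v(H_i) \geq 3$.
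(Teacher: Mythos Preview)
Your proof is correct and follows essentially the same route as the paper's: set $s=c'(H_1)$, $t=c'(H_2)$, apply Theorem~\ref{thm:main}, and read off the minors. You are simply more explicit than the paper in justifying $s,t\ge 1$ (via the witness $K_2$) and in checking that each $G_i$ has at least two vertices before forming the ratio $e(G_i)/(v(G_i)-1)$.
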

\begin{proof}
Let $s = c'(H_1)$ and $t=c'(H_2)$. Clearly $s,t \geq 1$. Let $G$ be a non-null graph such that  $e(G) > (s+t+1)(v(G)-1)$. Let $G_1$ and $G_2$ be the subgraphs of $G$ satisfying the conclusion of Theorem~\ref{thm:main}. Then $G_i$ contains $H_i$ as a minor for $i=1,2$. Therefore $G$ contains $H$ as a minor, as desired.
\end{proof}

We derive Theorem~\ref{thm:union} from Corollary~\ref{cor:c1} using the following observation.

\begin{lem}\label{lem:c1}
Let $H$ be a $2$-connected graph then $c'(H)=c(H)$.
\end{lem}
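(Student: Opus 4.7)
The plan is to prove $c'(H)=c(H)$ by establishing the two inequalities separately. The direction $c'(H) \geq c(H)$ is immediate from the definitions: for every non-null $H$-minor-free graph $G$ with $v(G) \geq 2$ we have $e(G)/(v(G)-1) \geq e(G)/v(G)$, and a single-vertex $G$ contributes $0$ to the supremum defining $c(H)$, so this direction requires no structural hypothesis on $H$.

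For the converse $c(H) \geq c'(H)$, I would employ the standard ``gluing at a vertex'' trick, which replaces the $v(G)-1$ denominator by a $v(G)$-type denominator at negligible cost. Given any $H$-minor-free graph $G$ with $v(G) \geq 2$ and an arbitrary vertex $v_0 \in V(G)$, form $G_k$ by taking $k$ vertex-disjoint copies of $G$ and identifying the $k$ copies of $v_0$ into a single vertex $v^\star$. Then $v(G_k) = k(v(G)-1)+1$ and $e(G_k) = k \cdot e(G)$, so
$$
\frac{e(G_k)}{v(G_k)} \;=\; \frac{k \cdot e(G)}{k(v(G)-1)+1} \;\longrightarrow\; \frac{e(G)}{v(G)-1} \quad \text{as } k \to \infty.
$$
Once each $G_k$ is shown to be $H$-minor-free, the desired inequality follows by letting $k \to \infty$ and then taking the supremum over $G$.

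The main obstacle is therefore the claim that $G_k$ avoids $H$ as a minor, and this is precisely where the $2$-connectivity hypothesis enters. Given a putative model $\{B_h : h \in V(H)\}$ of $H$ in $G_k$, I would analyse it via the block structure at $v^\star$: the deletion $G_k - v^\star$ is the disjoint union of $k$ components $C_1,\ldots,C_k$, each isomorphic to $G - v_0$. If no branch set contains $v^\star$, then connectedness of $H$ places all $B_h$ inside a single $C_i$, exhibiting $H$ as a minor of $G$ and giving a contradiction. Otherwise there is a unique $h_0 \in V(H)$ with $v^\star \in B_{h_0}$, every other branch set lies in some single $C_{i(h)}$, and if two such values $i(h)$ differ then every $H$-path between the corresponding vertices must cross $h_0$, forcing $h_0$ to be a cut vertex of $H$ and contradicting $2$-connectivity. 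Hence all remaining branch sets lie in one component, say $C_1$, and restricting $B_{h_0}$ to $V(C_1) \cup \{v^\star\}$ keeps it connected (as $v^\star$ is a cut vertex of $G_k$ through which every piece of $B_{h_0}$ attaches) while preserving all adjacencies with the other branch sets, again realising $H$ as a minor of $G$.
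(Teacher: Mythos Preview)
Your proof is correct and follows essentially the same route as the paper: both establish $c'(H)\geq c(H)$ trivially, and for the reverse inequality both glue $k$ copies of an $H$-minor-free $G$ at a single vertex and use $e(G_k)/v(G_k)\to e(G)/(v(G)-1)$. The only difference is that where the paper invokes the well-known fact that a $2$-connected minor must already be a minor of some block, you instead give a direct branch-set argument tailored to the one-cut-vertex structure of $G_k$; your argument is in effect a self-contained proof of that fact in this special case.
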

\begin{proof} Let $c=c(H)$.
Clearly $c'(H) \geq c$. Suppose for a contradiction that $c'(H)>c$, and there exists a graph $G$ such that $e(G)>c(v(G)-1)$ and $G$ does not contain $H$ as a minor. Let the graph $G_k$ be obtained from $k$ disjoint copies of $G$ by gluing them together on a single vertex. (I.e. $G_k = G^1 \cup G^2 \ldots G^k$, where $G^i$ is isomorphic to $G$ for $1 \leq i \leq k$ and there exists $v \in V(G)$ such that $V(G^i) \cap V(G^j)=\{v\}$ for all $1 \leq  i < j \leq k$.) It is well known that if a graph contains a $2$-connected graph as a minor then one of its maximal two connected subgraphs also contains it. Thus $G_k$ does not contain $H$ as a minor. However,
for sufficiently large $k$ we have$$\frac{e(G_k)}{v(G_k)} = \frac{ke(G)}{k(v(G)-1)+1}= c+\frac{k(e(G) - c(v(G)-1))-c}{k(v(G)-1)+1}>c(H),$$
a contradiction.
\end{proof}

\noindent \emph{Proof of Theorem~\ref{thm:union}.}
By Corollary~\ref{cor:c1} and Lemma~\ref{lem:c1} we have
$$c(H) \leq c'(H) \leq \sum_{i=1}^kc'(H_i) + k-1 = \sum_{i=1}^kc(H_i) + k-1. \qquad~\qed$$

In the remainder of the section we discuss lower bounds on the extremal function. Let $\tau(H)$ denote \emph{the vertex cover number} of the graph $H$, that is the minimum size of the set $X \subseteq V(H)$ such that $H - X$ is edgeless.

\begin{lem}\label{lem:tau} $c(H) \geq \tau(H)-1$  for every graph $H$. 
\end{lem}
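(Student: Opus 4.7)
The plan is to produce, for every integer $n\ge 1$, a graph $G_n$ which does not contain $H$ as a minor and whose density $e(G_n)/v(G_n)$ approaches $\tau(H)-1$. The natural candidate is the complete bipartite graph $G_n=K_{\tau(H)-1,\,n}$, since
$$
\frac{e(G_n)}{v(G_n)}=\frac{(\tau(H)-1)n}{n+\tau(H)-1}\longrightarrow \tau(H)-1
$$
as $n\to\infty$. Once one knows that none of these graphs contain $H$ as a minor, the desired bound $c(H)\ge \tau(H)-1$ follows by taking the supremum.

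The only nontrivial point is to show that $K_{\tau(H)-1,n}$ is $H$-minor-free. For this I would prove the auxiliary monotonicity statement: if $H$ is a minor of $G$, then $\tau(H)\le \tau(G)$. Given disjoint connected branch sets $(B_v)_{v\in V(H)}$ witnessing the minor, and a minimum vertex cover $Y$ of $G$, set $X=\{v\in V(H):B_v\cap Y\ne\emptyset\}$. Since the branch sets are pairwise disjoint, choosing one element of $B_v\cap Y$ for each $v\in X$ embeds $X$ injectively into $Y$, so $|X|\le|Y|=\tau(G)$. For every edge $uv\in E(H)$ there is a corresponding edge of $G$ between $B_u$ and $B_v$; as $Y$ covers that edge, one of $u,v$ lies in $X$. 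Thus $X$ is a vertex cover of $H$, giving $\tau(H)\le|X|\le\tau(G)$.

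Applied to $G_n=K_{\tau(H)-1,n}$: the smaller side forms a vertex cover of size $\tau(H)-1$, so $\tau(G_n)=\tau(H)-1<\tau(H)$ and the monotonicity statement rules out $H$ as a minor of $G_n$. Therefore
$$
c(H)\ge \sup_n\frac{e(G_n)}{v(G_n)}=\tau(H)-1,
$$
as required. The only conceptual step is the vertex cover monotonicity under minors, and it amounts to the one-line observation about branch sets above; the rest of the argument is a density computation on complete bipartite graphs, so I do not anticipate any genuine obstacle.
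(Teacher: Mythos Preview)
Your proof is correct and follows essentially the same approach as the paper: exhibit a family of graphs with vertex cover number at most $\tau(H)-1$ whose edge density tends to $\tau(H)-1$, and use that the vertex cover number cannot increase when passing to a minor. The paper uses the graph $\bar{K}_{t,n-t}$ (the complete bipartite graph $K_{t,n-t}$ with a clique added on the small side) rather than your $K_{t,n}$, and simply asserts the minor-monotonicity of $\tau$ without writing out the branch-set argument you give; neither difference is substantive.
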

\begin{proof}
Let $t=\tau(H)-1$, and  let $\bar{K}_{t,n-t}$ denote the graph on $n \geq t$ vertices obtained from the complete bipartite graph $K_{t,n-t}$ by making the $t$ vertices in the first part of the bipartition pairwise adjacent.
Then $\tau(G) \leq t$ for every minor $G$ of  $\bar{K}_{t,n-t}$. Therefore $H$ is not a minor $\bar{K}_{t,n-t}$, and 
$$\frac{e(\bar{K}_{t,n-t})}{v(\bar{K}_{t,n-t})}= \frac{nt-t(t+1)/2}{n} \to  t,$$
as $n \to \infty.$
\end{proof}

The following corollary follows immediately from Lemma~\ref{lem:tau} and implies that the bound in Theorem~\ref{thm:cycles} is tight whenever all components of $H$ are odd cycles, as claimed in the introduction.

\begin{cor} For every $2$-regular graph $H$ with $\brm{odd}(H)$ odd components we have
$$c(H) \geq \frac{v(H)+\brm{odd}(H)}{2}-1.$$
\end{cor}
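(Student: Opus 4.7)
The plan is to derive the corollary directly from Lemma~\ref{lem:tau}, so the entire task reduces to computing the vertex cover number $\tau(H)$ of an arbitrary $2$-regular graph $H$. Since a $2$-regular graph is a disjoint union of cycles and the vertex cover number is additive over connected components, it suffices to know $\tau(C_n)$ for each cycle length $n \geq 3$.

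First I would record the well-known fact that $\tau(C_n) = \lceil n/2 \rceil$. The upper bound is achieved by taking every other vertex along the cycle (and including one extra vertex when $n$ is odd); the matching lower bound follows from the complementary identity $\tau(C_n) + \alpha(C_n) = n$ together with $\alpha(C_n) = \lfloor n/2 \rfloor$, or simply by observing that any vertex cover must contain at least one endpoint of each of $\lceil n/2 \rceil$ disjoint edges in a maximum matching of $C_n$.

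Writing $H = C_{n_1} \cup C_{n_2} \cup \cdots \cup C_{n_m}$, additivity of $\tau$ over components gives
$$\tau(H) = \sum_{i=1}^m \left\lceil \frac{n_i}{2} \right\rceil = \sum_{i=1}^m \frac{n_i}{2} + \sum_{i : n_i \text{ odd}} \frac{1}{2} = \frac{v(H)}{2} + \frac{\brm{odd}(H)}{2},$$
where the extra half-unit appears exactly for each component of odd length. Combining this identity with Lemma~\ref{lem:tau} yields
$$c(H) \geq \tau(H) - 1 = \frac{v(H) + \brm{odd}(H)}{2} - 1,$$
which is the desired bound.

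There is no real obstacle here: the content sits entirely in Lemma~\ref{lem:tau}, and the remainder is the standard formula for $\tau$ of a cycle together with additivity over components. The only small check worth being explicit about in the write-up is the parity accounting that converts $\sum \lceil n_i/2 \rceil$ into $(v(H) + \brm{odd}(H))/2$.
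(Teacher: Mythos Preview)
Your proposal is correct and matches the paper's approach exactly: the paper states that the corollary follows immediately from Lemma~\ref{lem:tau}, and your argument supplies precisely the computation $\tau(H)=\sum_i \lceil n_i/2\rceil=(v(H)+\brm{odd}(H))/2$ that makes this explicit. There is nothing to add or change.
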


We finish this section by proving Corollary~\ref{cor:complete}.

\begin{proof}[Proof of Corollary~\ref{cor:complete}] By the results of ~\cite{Dirac64,Jorgensen94,Mader68,SonTho06} we have $c(K_t) = t-2$ for $3 \leq t \leq 9$. Therefore $c(kK_t) \leq kt-k-1$ by Theorem~\ref{thm:union}. On the other hand, $\tau(kK_t)=k\tau(K_t)=k(t-1)$. Thus  $c(kK_t) \geq kt-k-1$ by Lemma~\ref{lem:tau}.
\end{proof}

\section{Proof of Theorem~\ref{thm:main}}\label{sec:proof}

We prove Theorem~\ref{thm:main} by first constructing a fractional solution and then rounding it in two stages.  

Let $n=v(G)$, and assume $V(G)=[n]:=\{1,2,\ldots,n\}$ for simplicity. Let $S^{G}:=[0,1]^{V(G)}$. We will use bold letters for elements of $S^{G}$ and denote components of a vector $\x \in S^{G}$ by $x_1,x_2,\ldots,x_n$. For $r \in [0,1]$, we denote by ${\bf r}$ a constant vector $(r,r,\ldots, r) \in S^G$.
For $\x \in S^{G}$ let $e(\x)=\sum_{ij \in E(G)}x_ix_j$. 

Suppose that $x_i \in \{0,1\}$ for every $i \in V(G)$, and let $A = \{ i \in V(G) \: | \: x_i =1\}$ and $B=V(G)-A$. If $e(\x) > {\bf s} \cdot \x - s$, $e({\bf 1} - \x) > {\bf t} \cdot ({\bf 1} - \x) - t$, ${\bf{x}}\neq {\bf 1}$ and $\x \neq {\bf 0}$, then the subgraphs $G_1$ and $G_2$ of $G$ induced by $A$ and $B$, respectively, satisfy the conditions of the theorem. 

The above observation motivates to consider the following functions. Let  $$f(\x)=e(\x)-\left( {\bf s}+{\bf \frac{1}{2}}\right) \cdot \x,$$ and let
$$g(\x)=e({\bf 1} -\x)-\left({\bf t}+{\bf \frac{1}{2}}\right) \cdot ({\bf 1}- \x).$$ We say that $\x \in S^G$ is \emph{balanced} if 
\begin{equation}\label{e:fbalanced}
f(\x) > -\frac{(s+\frac{1}{2})^2}{s+t+1},
\end{equation}
\begin{equation}\label{e:gbalanced}
g(\x)>-\frac{(t+\frac{1}{2})^2}{s+t+1}, 
\end{equation}
\begin{equation}\label{e:xlarge}
\|\x\|_1 \geq s+1, \; \mathrm{and}
\end{equation}
 \begin{equation}\label{e:xsmall}
\|{\bf 1}- \x\|_1 \geq t+1.
\end{equation}

\vskip 5pt
\noindent {\bf Claim 1:} There exists a balanced $ \x \in S^G$.
\begin{proof} Let $\x \equiv ( s+ \frac{1}{2})/(s+t+1)$. Note that $v(G) \geq 2(s+t+1)$, as $v(G)(v(G)-1)/2 \geq e(G) > (s+t+1)(v(G)-1)$. Therefore $$\|\x\|_1 = \frac{s+\frac{1}{2}}{s+t+1}v(G) \geq 2s+1 \geq s+1, $$ and (\ref{e:xlarge}) holds for $\x$. Further,
\begin{align*}f(\x) &=\left(\frac{s+\frac{1}{2}}{s+t+1} \right)^2e(G) -\left(s+\frac{1}{2}\right)\frac{s+\frac{1}{2}}{s+t+1} n \\ &= \left(\frac{s+\frac{1}{2}}{s+t+1} \right)^2 \left(e(G) - (s+t+1)n\right)\\ &>-\frac{(s+\frac{1}{2})^2}{s+t+1}, \end{align*}
implying (\ref{e:fbalanced}).	
The inequalities (\ref{e:gbalanced}) and (\ref{e:xsmall}) hold by symmetry.
\end{proof}	

For $\x \in S^G$ let $\brm{fr}(\x)=\{i \in [n] \:|\: 0< x_i <1 \}$ denote the set of vertices corresponding to the non-integral values of ${\bf x}$. 
	
\vskip 5pt
\noindent {\bf Claim 2:} Let a balanced $\x \in S^G$ be chosen so that $|\brm{fr}(\x)|$ is minimum. Then $\brm{fr}(\x)$ is a clique in $G$.
\begin{proof} Suppose for a contradiction that there exist $i,j \in \brm{fr}(\x)$ such that $ij \not \in E(G)$. Then $f(\x)$ and $g(\x)$ are linear as functions of $x_i$ and $x_j$. That is, there exists linear functions $\delta_f(\vv),\delta_g(\vv)$, such that $f(\x+\vv)=f(\x)+\delta_f(\vv)$ and $g(\x+\vv)=g(\x)+\delta_g(\vv)$ for every $\vv=(v_1,\ldots,v_n)$ satisfying $v_k=0$ for every $k\not \in \{i,j\}$. Therefore there exists a vector $\vv \not \equiv 0$ as above, such that $\delta_f(\vv) \geq 0$ and $\delta_g(\vv) \geq 0$. Let $\eps$ be chosen maximum so that  $0 \leq \x + \eps\vv \leq 1$. Then inequalities (\ref{e:fbalanced})and  (\ref{e:gbalanced})  hold for $\x+\eps\vv$ by the choice of $\vv$. 

Suppose that
$\|\x+\eps\vv\|_1 < s+1$. Then there exists $0< \eps' < \eps$  such that 
$\|\x'\|_1 = s+1$, where $\x' =\x+\eps'\vv$.  Therefore \begin{align*}
-\frac{(s+\frac{1}{2})^2}{s+t+1} <f(\x+\eps'\vv) \leq \frac{(s+1)^2}{2} - \left(s+\frac{1}{2}\right)(s+1).
\end{align*}
The above  implies 
$$\left(\frac{	1}{2}-\frac{1}{s+t+1}\right)\left(s+\frac{1}{2}\right)^2< \frac{1}{8},$$
which is clearly contradictory for $s,t \geq 1$.
Thus (\ref{e:xlarge}) (and, symmetrically, (\ref{e:xsmall})) holds for $\x+\eps\vv$. It follows that $\x+\eps\vv$ is balanced, contradicting the choice of $\x$.
 \end{proof}

Let $\y$ be balanced such that $C:=\brm{fr}(\y)$ is a clique. As we can no longer continue to modify $f(\y)$ and $g(\y)$ linearly as in Claim 2, we adjust them as follows. Let $A=\{i \in [n] \: | \: x_i=1\}$,  $B=\{i \in [n] \: | \: x_i=0\}$, $a=|A|$, $b=|B|$ and  $c=|C|$.
Let $q = \sum_{i \in C}y_i$, and let $r=\lfloor q \rfloor$. 
For $x \in S^G$, let $$\bar{f}(\x)= r\sum_{i \in C}x_i - \frac{r(r+1)}{2}-\sum_{\{i,j\} \subseteq C}x_ix_j + e(\x) -{\bf s} \cdot \x,$$
and let 
\begin{align*}
\bar{g}(\x)&= (c-r-1)\sum_{i \in C}(1-x_i) - \frac{(c-r)(c-r-1)}{2} \\&-\sum_{\{i,j\} \subseteq C}(1-x_i)(1-x_j) + e({\bf 1} -\x) -{\bf t} \cdot ({\bf 1} -\x).
\end{align*}

\vskip 5pt
\noindent {\bf Claim 3:} Let $\x \in S^G$ be such that $x_i \in \{0,1\}$ for $i \in C$. Then $\bar{f}(\x) \leq e(\x) -{\bf s} \cdot \x$, and $\bar{g}(\x) \leq e({\bf 1} -\x) -{\bf t} \cdot ({\bf 1} -\x)$.
\begin{proof}
To verify the first inequality it suffices to show that  $$r\sum_{i \in C}x_i - \frac{r(r+1)}{2}-\sum_{\{i,j\} \subseteq C}x_ix_j \leq 0,$$
for every $\x \in \{0,1\}^C$. Let $p=\sum_{i\in C}x_i$. We have
\begin{align*}
r&\sum_{i \in C}x_i - \frac{r(r+1)}{2}-\sum_{\{i,j\} \subseteq C}x_ix_j \\&= rp - \frac{r(r+1)}{2} - \frac{p(p-1)}{2}= \frac{p-r -(p-r)^2}{2} \leq 0,
\end{align*}
as desired. The inequality $\bar{g}(\x) \leq e({\bf 1} -\x) -{\bf t} \cdot ({\bf 1} -\x)$ follows analogously.
\end{proof}

By Claim 3 it suffices to find $\x \in \{0,1\}^{[n]}$ such that $\bar{f}(\x)> -s$, $\bar{g}(\x) > -t$, ${\bf{x}}\neq {\bf 1}$ and $\x \neq {\bf 0}$. We start by estimating  $\bar{f}(\y)$ and $\bar{g}(\y)$. 

\vskip 5pt
\noindent {\bf Claim 4:} We have 
\begin{equation}\label{e:fbary}
\bar{f}(\y) > \frac{a}{2}+\frac{q^2}{2c}-\frac{(s+\frac{1}{2})^2}{s+t+1}
\end{equation}
and
\begin{equation}\label{e:gbary}
\bar{g}(\y) > \frac{b}{2}+\frac{(c-q)^2}{2c}-\frac{(t+\frac{1}{2})^2}{s+t+1}
\end{equation}

\begin{proof} It suffices to prove (\ref{e:fbary}), as (\ref{e:gbary}) is symmetric. We have
\begin{align*}
\bar{f}(\y) - f(\y) &= \frac{1}{2}(q+a) + rq -  \frac{r(r+1)}{2}- \sum_{\{i,j\} \subseteq C}y_iy_j  \\
&= \frac{1}{2}(q+a) + rq -  \frac{r(r+1)}{2}- \frac{q^2}{2}+\frac{1}{2}\sum_{i \in C}y^2_i\\
&\geq  \frac{1}{2}(q+a) + rq -  \frac{r(r+1)}{2}- \frac{q^2}{2}+\frac{q^2}{2c} \\ 
&=\frac{1}{2}(q+a)-\frac{r}{2}-\frac{(q-r)^2}{2}+\frac{q^2}{2c}\\
&\geq  \frac{1}{2}(q+a)-\frac{q}{2}+\frac{q^2}{2c} =\frac{a}{2}+\frac{q^2}{2c}.
\end{align*}
As $\y$ is balanced, (\ref{e:fbary}) follows. 
\end{proof}

Note that Claim 4 implies that $\bar{f}(\y) > -s$ and $\bar{g}(\y) > -t$.

We assume now that 
\begin{equation}\label{e:qbounds}
r \leq 2s \qquad \mathrm{and} \qquad c-r-1 \leq 2t
\end{equation}
The other cases are easier, as we will exploit the fact that the complete subgraph $G_1$ of $G$ on more than $ 2s$ vertices satisfies the theorem requirements.

The proof of the next claim is analogous to that of Claim 2 and we omit it.
\vskip 5pt
\noindent {\bf Claim 5:} There exists $\z \in S^G$  such that $\bar{f}(\z) \geq \bar{f}(\y)$, $\bar{f}(\z) \geq \bar{f}(\y)$, $z_i=y_i$ for every $i \in V(G)-C$, $\|\z\|_1 >1$, $\|{\bf 1} - \z\|_1 >1$  and
$|\brm{fr}(\z)| \leq 1$.

\vskip 10pt Consider a vector $\z$ that satisfies Claim 5. Let $i \in C$ be a vertex such that $z_j \in \{0,1\}$ for every $j \in V(G) - \{i\}$. We suppose without loss of generality that $z_i \leq \frac{1}{2}$, as the case $z_i \geq \frac{1}{2}$ is analogous due to symmetry between $\z$ and ${\bf 1} - \z$.  Let $\z^*$ be obtained from $\z$ by setting $z^*_i=0$. Then $\z^* \neq {\bf 1}$, $\z^* \neq {\bf 0}$, and,
as noted above, it suffices to show that $\bar{f}(\z^*) > -s$ and $\bar{g}(\z^*) > -t$.
We do this in the next two claims.

\vskip 5pt \noindent {\bf Claim 6:} $\bar{f}(\z^*) > -s$. 

\begin{proof} Let $x = z_i$ for brevity. We have
$\bar{f}(\z^*) \geq  \bar{f}(\z) - (r+a-s)x$.  Recall that $\y$ is balanced, and $\|y\|_1 \leq r+a+1$. Therefore by (\ref{e:xlarge}) we have $s \leq r+a$, and using (\ref{e:fbary}) we have
\begin{align*}
\bar{f}(\z^*) &> \frac{a}{2}+\frac{q^2}{2c}-\frac{(s+\frac{1}{2})^2}{s+t+1} - (r+a-s)x \\ &\geq\frac{s-q}{2}+\frac{q^2}{2c}-\frac{(s+\frac{1}{2})^2}{s+t+1},
\end{align*}
as $x \leq \frac{1}{2}$, $r \leq q$. By (\ref{e:qbounds}), it suffices to show 
\begin{equation*}\label{e:c6case2}
\frac{3}{2}s-\frac{(s+\frac{1}{2})^2}{s+t+1}\geq \frac{q}{2} - \frac{q^2}{2(q+2t+1)}.
\end{equation*}
As the right side increases with $q$ for fixed $s$ and $t$, it suffices to verify this inequality when $q=2s+1$. In this case we have
\begin{align*}
\frac{3}{2}s&-\frac{(s+\frac{1}{2})^2}{s+t+1} = \frac{2s^2 + 6st +2s-1}{4(s+t+1)} \\ &\geq
\frac{2s + 4st +2t +1}{4(s+t+1)} = \frac{2s+1}{2} - \frac{(2s+1)^2}{2(2s+2t+2)}.
\end{align*}
as desired.
 \end{proof}
 
\vskip 5pt \noindent {\bf Claim 7:} $\bar{g}(\z^*) > -t$. 
\begin{proof} 
To simplify the notation we prove the symmetric statement for $\bar{f}$ instead. That is, if $z_i \geq \frac{1}{2}$ and $\z^*$ is obtained from $\z$ by setting $z_i$ to $1$, we show that  $\bar{f}(\z^*) > -s$. Denote $1-z_i$ by $x$ for the duration of this claim. Then $\bar{f}(\z^*)  \geq \bar{f}(\z)+(r-s)x.$ If $r \geq s$ the claim follows directly from Claim 4, and so we assume $s \geq r$. Using (\ref{e:fbary}) and the inequality $s \leq r+a$, which was shown to hold in Claim 6,  we have 
\begin{align*}
\bar{f}(\z^*) &\geq \frac{a}{2}+\frac{q^2}{2c}-\frac{(s+\frac{1}{2})^2}{s+t+1} + (r-s)x \\ &\geq \frac{a+r-s}{2}+\frac{q^2}{2c}-\frac{(s+ \frac{1}{2})^2}{s+t+1} \\& \geq 
\frac{q^2}{2c}-\frac{(s+ \frac{1}{2})^2}{s+t+1} \geq -s,
\end{align*}
as desired. 
 \end{proof}

We have now proved the theorem in the case when (\ref{e:qbounds}) holds. Therefore without loss of generality we assume that $c-r-1 > 2t$.  We will need the following variant of Claims 2 and 5.

\vskip 5pt
\noindent {\bf Claim 8:} There exists $\z \in \{0,1\}^{V(G)}$  such that $\bar{f}(\z) \geq  \bar{f}(\y)$, $\sum_{i \in C}z_i \leq \lceil \sum_{i \in C}y_i \rceil$, and
$z_i=y_i$ for every $v \in V(G)-C$. 
\begin{proof}
The argument analogous to the proof of Claim 2, applied to the linear functions $\bar{f}$ and $-\sum_{i \in C} x_i$, instead of $f$ and $g$, implies existence of  $\z' \in S^G$  such that $\bar{f}(\z') \geq  \bar{f}(\y)$, $\sum_{i \in C}z'_i \leq  \sum_{i \in C}y_i $, 
$z_i=y_i$ for every $v \in V(G)-C$, and $|\brm{fr}(\z')| \leq 1$. 

Let $i \in C$ be such that $z'_j \in \{0,1\}$ for every $j \in V(C)-\{i\}$. Let $k = r +|\{j \in A \: |\: ij \in E(G)\}| -s$ be the coefficient of $z_i$ in $\bar{f}$ considered as a linear function of $z_i$. Let $\z$ be obtained from $\z'$ by setting $z_i = 1$ if $k \geq 0$, and by setting $z_i = 0$, otherwise. Then $\bar{f}(\z) \geq \bar{f}(\z')$, and $\z$ satisfies the claim. 
 \end{proof}

Finally, we  consider  a vector $\z$ that satisfies Claim 8, and let  $W=\{i\in C \:|\: z_i=0 \}$.
As $$\sum_{i \in C}z_i \leq\left \lceil \sum_{i \in C}y_i\right \rceil \leq r+1,$$ we have $|W| \geq c-r-1 > 2t$. Thus the subgraphs $G_1$ and $G_2$ of $G$ induced on $\{i\in V(G) \:|\: z_i=1 \}$ and $W$, respectively, satisfy the conditions of the theorem.
 
\section{Concluding remarks}

\subsubsection*{Improving Theorem~\ref{thm:main}.}

The following conjecture strengthening several aspects of Theorem~\ref{thm:main}, appears to be plausible and  implies Conjecture~\ref{conj:main}.

\begin{conj}\label{conj:partition}
Let $s,t \geq 0$ be real, and let $G$ be a non-null graph with $e(G) \geq  (s+t+1)v(G)$. Then there exist vertex disjoint non-null subgraphs $G_1$ and $G_2$ of $G$ such that $e(G_1) \geq sv(G_1)$, $e(G_2) \geq tv(G_2)$, and $V(G_1) \cup V(G_2)=V(G)$.
\end{conj}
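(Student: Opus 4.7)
The plan is to adapt the three-stage strategy from the proof of Theorem~\ref{thm:main}: construct a balanced fractional vector, concentrate its fractional support on a clique via linear moves, then round to an integer partition. Define $f(\x)=e(\x)-s\sum_i x_i$ and $g(\x)=e({\bf 1}-\x)-t\sum_i(1-x_i)$. The target is $\x\in\{0,1\}^{V(G)}$ with $\x\notin\{{\bf 0},{\bf 1}\}$, $f(\x)\geq 0$, and $g(\x)\geq 0$; the induced subgraphs on $A=\{i:x_i=1\}$ and $B=V(G)\setminus A$ then furnish the required partition.

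I would initialize with $\x_0=p\cdot{\bf 1}$ for $p=(2s+1)/(2(s+t+1))$. A direct calculation using $e(G)\geq(s+t+1)v(G)$ gives $f(\x_0)\geq p\,v(G)/2$ and, symmetrically, $g(\x_0)\geq(1-p)v(G)/2$. This $\Theta(v(G))$ slack is strictly larger than the $O(1)$ slack of Claim~1 in Theorem~\ref{thm:main}; it appears essential because the partition constraint forbids discarding vertices, so the integer rounding must absorb a correspondingly larger error.

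Next I would run the analogue of Claim~2. Take a balanced $\x$ (say with $f(\x)$ and $g(\x)$ each at least a quarter of their initial values) minimizing $|\brm{fr}(\x)|$. If $i,j\in\brm{fr}(\x)$ are non-adjacent in $G$, then $f$ and $g$ are both affine in $(x_i,x_j)$, so a simultaneous linear move in $(x_i,x_j)$ preserves both inequalities and drives one coordinate to $0$ or $1$, contradicting the minimality. Hence $\brm{fr}(\x)$ must form a clique $C$. I would then imitate the adjusted functionals $\bar f,\bar g$ of Claims~3--5 to reduce $|\brm{fr}(\x)\cap C|$ to at most one vertex and round the last coordinate by comparing its marginal contributions, just as in Claims~6 and~7.

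The main obstacle is this clique-rounding step. In Theorem~\ref{thm:main} the argument exits when the ``would-be'' complement clique $W=C\setminus A$ has more than $2t+1$ vertices, by setting $G_2=G[W]$ and discarding the original zero-vertices $B$; this escape hatch is unavailable under the partition constraint. Rescuing the argument requires showing that at the end of the concentration step the slack in $g$ actually covers the edge deficit of $G[B]$ itself, not merely of $G[W]$. A plausible route is to carry an additional penalty term tracking the edge deficiency of $G[B]$ through the analogue of Claim~2, and to verify that the minimizer still retains linear-in-$v(G)$ slack in the augmented balance. Closing this gap appears to be the hardest step, and it is quite possible that Conjecture~\ref{conj:partition} in full strength requires a genuinely new idea rather than a refinement of the present argument.
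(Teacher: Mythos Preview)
The statement you address is presented in the paper as an open \emph{conjecture}; the paper does not prove it. Immediately after stating it, the authors record only that adjusting the parameters in the proof of Theorem~\ref{thm:main} yields the partial cases of Theorem~\ref{thm:Wu} (namely $s=t$, or the stronger hypothesis $e(G)\geq(s+t+\tfrac32)v(G)$), and that they were unable to adapt Stiebitz's minimum-degree argument. There is thus no proof in the paper for your proposal to be compared against; your write-up should be read as an attack on an open problem, and indeed you yourself flag it as incomplete.

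Your diagnosis of the obstruction is accurate. The final stage of the proof of Theorem~\ref{thm:main} (the paragraph after Claim~8, handling the failure of~(\ref{e:qbounds})) genuinely relies on discarding $B=\{i:y_i=0\}$ and taking $G_2$ to be a subclique of $C$; under the partition requirement $V(G_1)\cup V(G_2)=V(G)$ this exit is unavailable, exactly as you say. The remedy you sketch---carrying an additional penalty that tracks the edge deficiency of $G[B]$ through the analogue of Claim~2---runs into a structural difficulty: the Claim~2 move finds, in a two-dimensional space, a direction that is simultaneously non-decreasing for \emph{two} linear functionals, and this is guaranteed by a dimension count; with a third linear objective no common non-decreasing direction need exist. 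Your concluding sentence is therefore the correct verdict: the outline faithfully records how far the method of Theorem~\ref{thm:main} carries, but it does not close the gap, and Conjecture~\ref{conj:partition} remains open.
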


Adjusting the parameters involved in the proof of Theorem~\ref{thm:main} one can prove a number of weakenings of Conjecture~\ref{conj:partition}. In particular, Wu using these methods proved the following.

\begin{thm}[Wu~\cite{WuPrivate}]\label{thm:Wu}
Conjecture~\ref{conj:partition} holds if $s=t$, or  $e(G) \geq  (s+t+\frac{3}{2})v(G)$.
\end{thm}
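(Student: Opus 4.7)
My plan is to adapt the proof of Theorem~\ref{thm:main}, refining the ``balanced'' condition and the clique rounding step (Claims 2--8) to absorb the three differences between Theorem~\ref{thm:main} and Conjecture~\ref{conj:partition}: the relaxation of the hypothesis from $>$ to $\geq$, the strengthening of the conclusion from $e(G_i) > s_i(v(G_i)-1)$ to $e(G_i) \geq s_i v(G_i)$, and the covering requirement $V(G_1) \cup V(G_2) = V(G)$. The covering condition is essentially free, since the integer vector $\z^*$ produced by the proof of Theorem~\ref{thm:main} already partitions $V(G)$; the real content is an additional reservoir of slack of size on the order of $s + t$ to be fed into Claims 6 and 7 so as to promote their conclusions from $\bar{f}(\z^*) > -s$, $\bar{g}(\z^*) > -t$ to $\bar{f}(\z^*) \geq 0$, $\bar{g}(\z^*) \geq 0$.

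For the extra-density case $e(G) \geq (s+t+\tfrac{3}{2}) v(G)$, I would retain the starting vector $\x \equiv (s+\tfrac{1}{2})/(s+t+1)$, observing that the additional $\tfrac{1}{2} v(G)$ edges inflate $f(\x)$ by $\tfrac{(s+1/2)^2}{2(s+t+1)^2} v(G)$ (and $g(\x)$ by the symmetric amount). Redefining ``balanced'' by shifting the right-hand sides of (\ref{e:fbalanced}) and (\ref{e:gbalanced}) by $-s$ and $-t$, respectively, the existence of a balanced starting vector persists whenever $v(G) \geq n_0(s,t)$ for a suitable threshold. The Claim 2--5 reduction then carries over verbatim, and the closed-form inequalities in the analogs of Claims 6 and 7 close at the worst case $q = 2s+1$ thanks to the density bump. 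The finitely many small-$v(G)$ cases are handled separately by observing that $G$ is then dense enough to contain two disjoint cliques of the required sizes, which serve directly as $G_1$ and $G_2$.

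For the symmetric case $s = t$, the needed slack is supplied not by extra density but by symmetry. I would start with $\y \equiv \tfrac{1}{2}$, so that $f(\y) = g(\y) \geq 0$ holds immediately from $e(G) \geq (2s+1) v(G)$, and observe that the whole problem is invariant under the involution $\x \leftrightarrow {\bf 1} - \x$; a direct computation shows that at $\y = {\bf 1/2}$ the gradients $\nabla f$ and $\nabla g$ are componentwise negatives of each other. The Claim 2 reduction can then be carried out while maintaining $f(\y) = g(\y)$ along the way, by selecting at each stage an improving direction from the one-dimensional subspace $\{\delta_f = \delta_g\}$ available at each symmetric step. In the final rounding step, the choice of whether to push the last fractional coordinate to $0$ or to $1$ decides which of $\bar{f}, \bar{g}$ absorbs the rounding loss, and taking the more favourable option halves the worst-case loss --- exactly what is required to promote both conclusions to non-negative simultaneously.

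The main obstacle will be the arithmetic verification of the tightened analogs of Claims 6 and 7 under the new constants: in the extra-density case, pinning down the extremal value of $q$ and confirming that the $\tfrac{1}{2} v(G)$ edge surplus suffices to close the inequality with $\geq 0$ in place of $> -s$; in the symmetric case, checking that the symmetrised rounding really does split the loss favourably at every worst-case configuration, and that maintaining $f(\y) = g(\y)$ through the reduction is compatible with (\ref{e:xlarge})--(\ref{e:xsmall}). Neither obstacle is conceptually deep, but both require careful bookkeeping.
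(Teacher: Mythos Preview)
The paper does not actually prove Theorem~\ref{thm:Wu}. It is stated in the concluding remarks as a result of Wu (cited as private communication), with the sole indication that it is obtained by ``adjusting the parameters involved in the proof of Theorem~\ref{thm:main}''. There is thus no proof in the paper to compare your proposal against; your plan to adapt the machinery of Claims~1--8 is precisely the route the paper suggests, and at that level your proposal is consistent with what the authors have in mind.

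That said, a couple of points in your sketch deserve scrutiny. In the extra-density case, your treatment of small $v(G)$ is not convincing: the bonus in $f(\x)$ you record is $\tfrac{(s+1/2)^2}{2(s+t+1)^2}\,v(G)$, so to absorb a deficit of order $s$ you need $v(G)$ of order $(s+t+1)^2/s$; below that threshold the hypothesis $e(G)\ge (s+t+\tfrac32)v(G)$ only gives average degree about $2(s+t)$, which is far from forcing two disjoint cliques of the sizes you would need. You will likely have to rework the constants in Claims~6--7 directly rather than peel off a ``small $n$'' case. In the symmetric case $s=t$, maintaining $f(\y)=g(\y)$ through the Claim~2 reduction is sound (since $f-g$ is linear in $\x$), but you have not explained how the norm constraints (\ref{e:xlarge})--(\ref{e:xsmall}) are preserved along your restricted one-dimensional moves, nor how the analogue of Claim~5 on the clique interacts with the $\bar f=\bar g$ constraint once the quadratic terms have been linearised away; the ``halves the worst-case loss'' heuristic for the final rounding needs to be made into an inequality. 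None of these are fatal, but they are exactly the ``careful bookkeeping'' you flag, and the paper gives no further guidance on how Wu resolves them.
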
 

Finally, let us note that a beautiful theorem of Stiebitz can be considered as a direct analogue of Conjecture~\ref{conj:partition} for minimum, rather than average, degrees.

\begin{thm}[Stiebitz~\cite{Stiebitz96}]\label{thm:Stiebitz}
Let $s,t \geq 0$ be integers, and let $G$ be a graph with minimum degree $s+t+1$. Then there exist vertex disjoint subgraphs $G_1$ and $G_2$ with $V(G_1) \cup V(G_2)=V(G)$ such that the minimum degree of $G_1$ is at least $s$ and  the minimum degree of $G_2$ is at least $t$.
\end{thm}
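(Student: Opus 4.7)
My approach would be an extremal-partition exchange argument. Among all ordered partitions $(V_1,V_2)$ of $V(G)$ with both parts nonempty and with $G[V_1]$ having minimum degree at least $s$, I would pick one minimizing $|V_2|$. Such partitions exist: taking $V_1=V(G)\setminus\{u\}$ for any single $u$, every $w\in V_1$ has $d_{V_1}(w)\geq d_G(w)-1\geq s+t\geq s$. The goal is then to show that at this extremal partition, $G[V_2]$ automatically has minimum degree at least $t$.

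Suppose for contradiction that some $v\in V_2$ has $d_{V_2}(v)\leq t-1$. Then $d_{V_1}(v)\geq (s+t+1)-(t-1)=s+2$. Provided $|V_2|\geq 2$, moving $v$ from $V_2$ to $V_1$ produces a partition whose new $V_1$ still has minimum degree at least $s$ (degrees of old $V_1$-vertices only increase, and $v$ itself contributes degree $\geq s+2$) while the new $V_2$ is still nonempty and has strictly smaller size---contradicting minimality.

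The real difficulty is the boundary case in which the minimum $|V_2|$ equals $1$, so $V_2=\{v\}$ and $d_{V_2}(v)=0$; the exchange above then fails whenever $t\geq 1$. To bypass this I would switch to a weighted potential, for example the quantity $\Phi(V_1,V_2)=e(V_1,V_2)+(t-s)|V_2|$, again minimized over partitions with both parts nonempty. The first-order optimality condition at an interior minimum (one with $|V_1|,|V_2|\geq 2$), combined with $d_G(v)\geq s+t+1$, yields $d_{V_1}(v)\geq s+\tfrac{1}{2}$ for each $v\in V_1$ and, symmetrically, $d_{V_2}(v)\geq t+\tfrac{1}{2}$ for each $v\in V_2$; since $s,t$ are integers this is exactly the required bound. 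The step I expect to be the main obstacle is ruling out that the optimum of $\Phi$ is attained at a partition with a singleton part. I would attempt either a secondary tie-breaking modification of $\Phi$ that penalizes small parts, or an induction on $|V(G)|$ applied to $G-v$, which still has minimum degree at least $s+t$ and so falls into a shifted Stiebitz-type statement.
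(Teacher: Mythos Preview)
The paper does not contain a proof of this theorem: it is quoted from Stiebitz's paper as a known minimum-degree analogue of Conjecture~\ref{conj:partition}, and the authors explicitly remark that they were unable to adapt its proof to their setting. There is therefore nothing in the present paper to compare your proposal against.

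As for the proposal itself: your first extremal argument and your identification of the case $|V_2|=1$ as the obstruction are both correct. The weighted potential $\Phi$ also yields the right first-order inequalities at an interior optimum, exactly as you say. But the boundary case is a genuine gap, not a technicality. Your induction sketch does not close it: deleting a vertex $v$ drops the minimum degree only to $s+t$, and even if a shifted instance produces a partition of $G-v$ with minimum degrees $\ge s$ and $\ge t-1$, you have no control over where the neighbours of $v$ lie, so reinserting $v$ need not repair the deficient side. A tie-breaking perturbation of $\Phi$ runs into the same wall, since at a singleton part the first-order move is unavailable and the potential tells you nothing about degrees inside that part. Stiebitz's actual argument proceeds differently---via a \emph{minimal} set $A$ with $\delta(G[A])\ge s+1$ and a degeneracy-type analysis of its complement---and the substance of the proof is precisely in handling the interaction that a cut-based potential does not see.
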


Unfortunately, we were unable to adapt the proof of Theorem~\ref{thm:Stiebitz} to Conjecture~\ref{conj:partition}.

\subsubsection*{Improving Theorem~\ref{thm:cycles}.}

The bound on the extremal function provided by Theorem~\ref{thm:cycles} is not tight when some, but not all, components of $H$ are even cycles.
A stronger conjecture below, which differs only slightly from~\cite[Conjecture 5.7]{HarWoo15}, if true would determine the extremal function for all $2$-regular graphs.

\begin{conj}\label{conj:chgeneral}
Let $H$ be a $2$-regular graph with $\brm{odd}(H)$ odd components, then
$$c(H)= \frac{v(H)+\brm{odd}(H)}{2}-1,$$
unless $H=C_{2l}$, in which case $c(H)=(2l-1)/2$, or $H=kC_4$, in which case $c(H)=2k -\frac{1}{2}$. 
\end{conj}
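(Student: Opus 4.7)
\emph{Lower bounds.} The general lower bound $c(H) \geq (v(H)+\brm{odd}(H))/2-1$ when $\brm{odd}(H) \geq 1$ follows immediately from Lemma~\ref{lem:tau} together with the computations $\tau(C_{2l}) = l$ and $\tau(C_{2l+1}) = l+1$, which give $\tau(H) = (v(H)+\brm{odd}(H))/2$. The exceptional bound $c(C_{2l}) \geq (2l-1)/2$ is witnessed by the classical Erd\H{o}s--Gallai extremal family: a book of $K_{2l-1}$'s glued at a common vertex has no cycle of length $\geq 2l$ (hence no $C_{2l}$ minor) and edge-to-vertex ratio tending to $(2l-1)/2$. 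The sharper bound $c(kC_4) \geq 2k - \tfrac{1}{2}$ for the second exception is more subtle, and would come from a tailored extremal construction in the spirit of the Dirac--Justesen extremal graphs for Theorem~\ref{thm:Just}, designed to destroy any collection of $k$ vertex-disjoint $C_4$-minors while maintaining edge density $2k - \tfrac{1}{2}$.

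\emph{Upper bound via a refined splitting theorem.} The upper bound $c(H) \leq (v(H)+\brm{odd}(H))/2-1$ in the non-exceptional case is the main target. Theorem~\ref{thm:union} yields only $(v(H)+\brm{comp}(H))/2-1$, which overshoots by $k_{\mathrm{even}}/2$, where $k_{\mathrm{even}}$ is the number of even components. The plan is to prove the following refinement of Theorem~\ref{thm:main}: for real $s \geq 1$ and integer $l \geq 3$, any non-null graph $G$ with $e(G) > (s+l)(v(G)-1)$ contains vertex-disjoint non-null subgraphs $G_1, G_2$ such that $e(G_1) > s(v(G_1)-1)$ and $G_2$ contains a cycle of length at least $2l$ (equivalently, a $C_{2l}$ minor). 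Since $c'(C_{2l}) = (2l-1)/2$ by Theorem~\ref{thm:ErdGal}, this refinement incurs a penalty of only $+\tfrac{1}{2}$ per peel, rather than the $+1$ of Theorem~\ref{thm:union}. Iteratively peeling off components of $H$, with odd cycles handled via Theorem~\ref{thm:union} (penalty $+1$) and even cycles of length $\geq 6$ via the refinement (penalty $+\tfrac{1}{2}$), then yields the conjectured bound.

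\emph{Main obstacle.} The crux is establishing the refined splitting theorem. Claims~1 and~2 of Section~\ref{sec:proof} adapt essentially verbatim. The rounding Claims~3--7, however, collectively pay a constant of essentially $+1$ when adjusting $f$ and $g$ via the fractional clique $C$, and saving half a unit on the even-cycle side would require exploiting the rigidity of Erd\H{o}s--Gallai extremal graphs: when the naive rounding is too wasteful, either a residual subgraph of $G$ is dense enough to produce a cycle of length $\geq 2l$ directly via Theorem~\ref{thm:ErdGal}, or $G$ is forced to resemble a book of $K_{2l-1}$'s, a highly constrained configuration amenable to direct analysis. The $H = kC_4$ exception shows that the $+\tfrac{1}{2}$ saving cannot be pushed down to cycle length $4$: there the cycle-avoidance and $\tau$-avoidance extremal regimes merge, so the refinement must assume $l \geq 3$ and the $kC_4$ case must be handled by a separate Dirac--Justesen-style packing argument in the spirit of Theorem~\ref{thm:Just}. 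Combining this packing argument with the fractional-rounding framework at the boundary case appears to be the most intricate part of the program.
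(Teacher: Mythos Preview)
The statement you are addressing is Conjecture~\ref{conj:chgeneral}, which the paper poses as an \emph{open problem}; there is no proof in the paper to compare against. What you have written is a research programme rather than a proof, and you are evidently aware of this (``The plan is to prove\ldots'', ``Main obstacle'', ``appears to be the most intricate part'').

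Beyond the obstacle you acknowledge, the programme has two concrete gaps. First, even granting your refined splitting theorem, the arithmetic does not close when $\brm{odd}(H)=0$ and $H$ has at least two components. Peeling $q-1$ even cycles at penalty $+\tfrac12$ each and leaving one even cycle as the base yields
\[
c'(H) \le \sum_{j=1}^{q}\Bigl(l_j-\tfrac12\Bigr) + \tfrac{q-1}{2} = \tfrac{v(H)}{2}-\tfrac12,
\]
whereas the conjecture asserts $v(H)/2 - 1$; for instance $H = C_6 \cup C_6$ would give only $11/2$ rather than the conjectured~$5$. So the claim that the iteration ``yields the conjectured bound'' is incorrect as stated. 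Second, $2$-regular graphs containing $C_4$ components but not of the form $kC_4$ (e.g.\ $H = C_4 \cup C_5$, where the conjecture predicts $c(H)=4$) fall outside both your refinement (which requires $l\ge 3$) and your separate $kC_4$ argument, and Theorem~\ref{thm:union} alone gives only $9/2$ there.

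For orientation, the paper notes that the asymptotic analogue (Theorem~\ref{thm:asymptotic2}, with $c_\infty$ in place of $c$) was established by Kapadia and Norin using structural tools from graph minor theory, a route rather different from refining the fractional-rounding argument of Section~\ref{sec:proof}.
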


\subsubsection*{Asymptotic density.}

Let $\brm{ex}_m(n,H)$ denote the maximum number of edges in a graph on $n$ vertices not containing $H$ as a minor. Then $$c(H)=\sup_{n \geq 1}\left\{\frac{\brm{ex}_m(n,H)}{n}\right\}.$$ 
The asymptotic density of graphs not containing $H$ as a minor is determined by a different function 
$$c_{\infty}(H)=\limsup_{n \to \infty}\left\{\frac{\brm{ex}_m(n,H)}{n}\right\},$$ 
defined by Thomason in~\cite{Thomason08}. If $H$ is connected then $c(H)=c_\infty(H)$, however the equality does not necessarily hold for disconnected graphs which are the subject of this paper. Some of the more advanced tools in graph minor theory could be used to bound $c_{\infty}(H)$, and Kapadia and Norin~\cite{KapNorDensity} were able to establish the following asymptotic analogues of Conjectures~\ref{conj:main} and~\ref{conj:chgeneral}.

\begin{thm}\label{thm:asymptotic1}
Let $H$ be a disjoint union of non-null graphs $H_1$ and $H_2$ then
$$c_{\infty}(H) \leq c_{\infty}(H_1) + c_{\infty}(H_2)+1.$$ 
\end{thm}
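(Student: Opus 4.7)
The plan is to show that for every $\varepsilon>0$, every sufficiently large graph $G$ with $e(G)/v(G)\ge c_\infty(H_1)+c_\infty(H_2)+1+\varepsilon$ contains $H:=H_1\sqcup H_2$ as a minor; letting $\varepsilon\to 0$ then yields the theorem. Write $c_i:=c_\infty(H_i)$ and $d:=e(G)/v(G)$. By the definition of $c_\infty$, there exist thresholds $N_i=N_i(\varepsilon)$ such that any graph on at least $N_i$ vertices of density at least $c_i+\varepsilon/6$ contains $H_i$ as a minor. The task reduces to producing vertex-disjoint subsets $A,B\subseteq V(G)$ with $|A|,|B|\ge \max(N_1,N_2)$ and with $e(G[A])/|A|\ge c_1+\varepsilon/6$ and $e(G[B])/|B|\ge c_2+\varepsilon/6$.

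I would achieve this via a random partition, which works here because we are in the asymptotic regime (in contrast to Theorem~\ref{thm:main}, whose $(v(G_i)-1)$ normalization leaves small parts with density too low to force a minor). Set $r:=(c_1+\varepsilon/3)/d\in(0,1)$, valid since $d\ge c_1+c_2+1+\varepsilon$. Assign each vertex independently to $A$ with probability $r$ and to $B$ otherwise. Then $\mathbb{E}|A|=rv(G)$ and $\mathbb{E}\,e(G[A])=r^2 e(G)$, giving expected density $rd=c_1+\varepsilon/3$ inside $A$; symmetrically, the expected density inside $B$ is $(1-r)d\ge c_2+1+2\varepsilon/3$. Both expected sizes are linear in $v(G)$ and both expected densities exceed $c_i+\varepsilon/6$ by a positive constant.

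The main obstacle is the concentration of $e(G[A])/|A|$ about $rd$. The sizes $|A|,|B|$ concentrate by Chernoff, but $e(G[A])$ is a sum of correlated edge-indicators and a single vertex toggle can shift it by that vertex's degree, so a direct variance computation yields $\mathrm{Var}(e(G[A]))=O\bigl(v(G)+\sum_v d_v^2\bigr)$. For this to be $o(v(G)^2)$ one needs the maximum degree to be $o(v(G))$. I would treat this in two cases: if $\Delta(G)=o(v(G))$, Chebyshev or McDiarmid directly gives deviation $o(v(G))$, negligible against the linear mean, and a union bound over the four size-and-density events produces the desired split with positive probability; otherwise $G$ contains a vertex $v_0$ of linear degree, whose large neighbourhood can be used to construct part of an $H_1$- or $H_2$-minor directly, after which one recurses on the rest of $G$ with slightly weakened parameters. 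Applying the definition of $c_\infty(H_i)$ inside each of $G[A]$ and $G[B]$ then extracts the vertex-disjoint $H_1$- and $H_2$-minors, which together give the required $H$-minor of $G$.
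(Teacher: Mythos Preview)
The paper does not prove this theorem; it is quoted from~\cite{KapNorDensity} and the paper explicitly remarks that ``some of the more advanced tools in graph minor theory'' are used there. So there is no in-paper proof to compare against, and your attempt must be judged on its own.

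Your low-degree case is fine: when $\Delta(G)=o(v(G))$, the variance bound $\mathrm{Var}(e(G[A]))=O(\Delta(G)\cdot d\cdot v(G))$ together with Chebyshev (or McDiarmid) indeed gives the required concentration of $e(G[A])/|A|$ around $rd$, and the union bound over the four events goes through.

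The genuine gap is the high-degree case. Your proposal---``use the large neighbourhood of $v_0$ to construct part of an $H_i$-minor directly, then recurse with slightly weakened parameters''---is not an argument. Concretely:
\begin{itemize}
\item A vertex $v_0$ together with its neighbourhood induces a graph that, a priori, is only guaranteed to contain a large star; for general $H_i$ this gives you nothing towards an $H_i$-minor.
\item Deleting $v_0$ (or any bounded set of high-degree vertices) can annihilate the density: in $K_{k,n-k}$ with $k$ constant, every vertex on the small side has linear degree, removing those $k$ vertices leaves an independent set, yet the density $e/v\approx k$ can be made to exceed $c_\infty(H_1)+c_\infty(H_2)+1+\varepsilon$.
\item ``Slightly weakened parameters'' cannot absorb a density drop of order a constant, and there is no bound on how many recursion steps are needed, since after one deletion the remaining graph may still have vertices of linear degree.
\end{itemize}
Note too that the paper's own Theorem~\ref{thm:main} already produces vertex-disjoint $G_1,G_2$ of the right densities; what it does \emph{not} control is $v(G_1)$ and $v(G_2)$, and that is exactly the obstacle your high-degree case is trying to sidestep. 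The paper's comment that structural graph-minor tools are required suggests that this obstacle is real and not removable by an elementary concentration-plus-cleanup argument.
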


\begin{thm}\label{thm:asymptotic2}
Let $H$ be a $2$-regular graph with $\brm{odd}(H)$ odd components, then
$$c_{\infty}(H)= \frac{v(H)+\brm{odd}(H)}{2}-1,$$
unless $H=C_{2l}$, in which case $c_{\infty}(H)=(2l-1)/2$, or $H=kC_4$, in which case $c_{\infty}(H)=2k -\frac{1}{2}$.
\end{thm}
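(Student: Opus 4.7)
The plan is to establish matching upper and lower bounds, with ad hoc treatment of the two exceptions. For the lower bound, any $2$-regular graph $H$ with cycle lengths $k_1,\ldots,k_m$ has vertex cover $\tau(H) = \sum_i \lceil k_i/2 \rceil = (v(H)+\brm{odd}(H))/2$, so Lemma~\ref{lem:tau} (whose construction $\bar{K}_{t,n-t}$ has $e/v \to t$ as $n \to \infty$) gives $c_{\infty}(H) \geq (v(H)+\brm{odd}(H))/2 - 1$. For $H=C_{2l}$, the Erd\H{o}s--Gallai extremal construction obtained by gluing $\lfloor(n-1)/(2l-2)\rfloor$ copies of $K_{2l-1}$ at a common vertex has density approaching $(2l-1)/2$ and no cycle of length $\geq 2l$, hence no $C_{2l}$ minor. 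For $H=kC_4$, an analogous cut-vertex construction supplies density approaching $2k-1/2$ while preventing $k$ vertex-disjoint $C_4$ minors (since $C_4$ is $2$-connected, every $C_4$ minor lies in a single $2$-connected block).

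For the upper bound, proceed by induction on the number of components of $H$. The base case $H=C_r$ follows from Erd\H{o}s--Gallai (Theorem~\ref{thm:ErdGal}): $c_\infty(C_r) = (r-1)/2$ for every $r \geq 3$, which covers the $C_{2l}$ exception and matches the generic formula when $r$ is odd. When $H$ contains an odd cycle $C_{2l+1}$, write $H = C_{2l+1} \sqcup H'$ and apply Theorem~\ref{thm:asymptotic1}:
$$c_\infty(H) \leq c_\infty(C_{2l+1}) + c_\infty(H')+1 \leq l + \frac{v(H')+\brm{odd}(H')}{2}-1+1 = \frac{v(H)+\brm{odd}(H)}{2}-1,$$
where the middle inequality uses the inductive hypothesis on $H'$. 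Crucially, the $+1$ overhead of Theorem~\ref{thm:asymptotic1} is exactly compensated by the increment in $\brm{odd}$, so every $H$ with at least one odd component is handled.

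The main obstacle is the case where $H$ is a disjoint union of $k \geq 2$ even cycles and is not of the form $kC_4$. For such $H$, the target overhead above $\sum_i c_\infty(C_{k_i})=(v(H)-k)/2$ is only $(k-2)/2$, strictly less than the $k-1$ produced by iterating Theorem~\ref{thm:asymptotic1}. The plan is to invoke the refined graph-minor tools referenced in the statement (along the lines of Thomason~\cite{Thomason08}), which characterise the asymptotically extremal $C_{2l}$-minor-free graphs with $l \geq 3$ as having a rigid near-regular block structure. Using this rigidity, one establishes a strengthened splitting inequality $c_\infty(H_1 \sqcup H_2) \leq c_\infty(H_1) + c_\infty(H_2) + \alpha$ with $\alpha < 1$, whose precise value depends on whether $H_1, H_2$ fall into the exceptional families; iterating this refined splitting in an order that peels off the long even cycles first, and closing with the base inequality $c_\infty(C_{2l_1} \sqcup C_{2l_2}) \leq c_\infty(C_{2l_1}) + c_\infty(C_{2l_2})$ for the final pair (again using rigidity), yields exactly the target. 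The hypothesis $H \neq kC_4$ ensures a cycle of length at least $6$ is always available for the iteration, and formalising the strengthened splitting inequalities via asymptotic structure theory is the technical core of the argument.
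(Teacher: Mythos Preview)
The paper does not actually prove this theorem: it is stated in the concluding remarks and attributed to Kapadia and Norin~\cite{KapNorDensity}, so there is no proof here to compare your attempt against. Evaluating your proposal on its own merits, there are genuine gaps beyond the one you flag.

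First, your inductive step for the ``at least one odd component'' case is incomplete. You peel off $C_{2l+1}$ and invoke the bound $c_\infty(H') \le (v(H')+\brm{odd}(H'))/2 - 1$ on the remainder, but this inequality is \emph{false} precisely when $H'$ is one of the exceptional graphs: if $H' = C_{2l'}$ then $c_\infty(H') = l' - \tfrac{1}{2} > l' - 1$, and if $H' = k'C_4$ then $c_\infty(H') = 2k' - \tfrac{1}{2} > 2k' - 1$. In those cases Theorem~\ref{thm:asymptotic1} overshoots the target by $\tfrac{1}{2}$, so graphs such as $C_{2l+1} \sqcup C_{2l'}$ or $C_{2l+1} \sqcup k'C_4$ are not handled by your reduction. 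Reordering the peeling does not help: splitting off an even cycle via Theorem~\ref{thm:asymptotic1} also loses $\tfrac{1}{2}$ relative to the target, for the same reason.

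Second, the ``main obstacle'' paragraph is a wish list rather than a sketch. You posit a strengthened splitting inequality with overhead $\alpha<1$ and a base inequality $c_\infty(C_{2l_1}\sqcup C_{2l_2}) \le c_\infty(C_{2l_1}) + c_\infty(C_{2l_2})$, but neither is established, and the savings required (essentially $\tfrac{1}{2}$ per even component) are exactly the content of the theorem in this regime. Invoking ``rigidity'' and ``asymptotic structure theory'' names the toolbox without supplying an argument. Finally, a minor point: your lower bound for $kC_4$ is misdescribed. A cut-vertex construction will not work, since distinct blocks can each contribute $C_4$ minors avoiding the cut vertex. A correct construction is $K_{2k-1}$ completely joined to a large matching: every $C_4$ minor there must use at least two vertices of the $K_{2k-1}$, so $k$ disjoint ones would need $2k$ such vertices, while the density tends to $2k-\tfrac{1}{2}$.
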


\vskip 10pt
\noindent {\bf Acknowledgement.} This research was partially completed at a workshop held at the Bellairs Research Institute
in Barbados in April 2015. We thank the participants of the workshop  and Rohan Kapadia for  helpful discussions. We are especially grateful to Katherine Edwards, who contributed to the project, but did not want to be included as a coauthor.

\bibliographystyle{plain}
\bibliography{snorin}

\begin{thebibliography}{10}

\bibitem{ChuReeSey11}
Maria Chudnovsky, Bruce Reed, and Paul Seymour.
\newblock The edge-density for {$K_{2,t}$} minors.
\newblock {\em J. Combin. Theory Ser. B}, 101(1):18--46, 2011.

\bibitem{Dirac64}
G.~A. Dirac.
\newblock Homomorphism theorems for graphs.
\newblock {\em Math. Ann.}, 153:69--80, 1964.

\bibitem{ErdGal59}
P.~Erd{\H{o}}s and T.~Gallai.
\newblock On maximal paths and circuits of graphs.
\newblock {\em Acta Math. Acad. Sci. Hungar}, 10:337--356 (unbound insert),
  1959.

\bibitem{HarWooAverage15}
Daniel~J. Harvey and David~R. Wood.
\newblock Average degree conditions forcing a minor.
\newblock 2015.
\newblock ar{X}iv:1506.01775.

\bibitem{HarWoo15}
Daniel~J. Harvey and David~R. Wood.
\newblock Cycles of given size in a dense graph, 2015.
\newblock ar{X}iv:1502.03549.

\bibitem{Jorgensen94}
Leif~K. J{\o}rgensen.
\newblock Contractions to {$K_8$}.
\newblock {\em J. Graph Theory}, 18(5):431--448, 1994.

\bibitem{Just85}
P.~Justesen.
\newblock On independent circuits in finite graphs and a conjecture of {E}rd{\H
  o}s and {P}\'osa.
\newblock In {\em Graph theory in memory of {G}. {A}. {D}irac ({S}andbjerg,
  1985)}, volume~41 of {\em Ann. Discrete Math.}, pages 299--305.
  North-Holland, Amsterdam, 1989.

\bibitem{KapNorDensity}
Rohan Kapadia and Sergey Norin.
\newblock In preparation.

\bibitem{KosPri10}
A.~V. Kostochka and N.~Prince.
\newblock Dense graphs have {$K_{3,t}$} minors.
\newblock {\em Discrete Math.}, 310(20):2637--2654, 2010.

\bibitem{Mader68}
W.~Mader.
\newblock Homomorphies\"atze f\"ur {G}raphen.
\newblock {\em Math. Ann.}, 178:154--168, 1968.

\bibitem{Myers2005}
Joseph~Samuel Myers and Andrew Thomason.
\newblock The extremal function for noncomplete minors.
\newblock {\em Combinatorica}, 25(6):725--753, 2005.

\bibitem{Qian}
Yingjie Qian.
\newblock Private communication.

\bibitem{ReeWoo14}
Bruce Reed and David~R. Wood.
\newblock Forcing a sparse minor, 2014.
\newblock arXiv:1402.0272.

\bibitem{SonTho06}
Zi-Xia Song and Robin Thomas.
\newblock The extremal function for {$K_9$} minors.
\newblock {\em J. Combin. Theory Ser. B}, 96(2):240--252, 2006.

\bibitem{Stiebitz96}
Michael Stiebitz.
\newblock Decomposing graphs under degree constraints.
\newblock {\em J. Graph Theory}, 23(3):321--324, 1996.

\bibitem{Thomason01}
Andrew Thomason.
\newblock The extremal function for complete minors.
\newblock {\em J. Combin. Theory Ser. B}, 81(2):318--338, 2001.

\bibitem{Thomason08}
Andrew Thomason.
\newblock Disjoint unions of complete minors.
\newblock {\em Discrete Math.}, 308(19):4370--4377, 2008.

\bibitem{WuPrivate}
Hehui Wu.
\newblock Private communication.

\end{thebibliography}
\end{document}